\documentclass{article}
\usepackage[utf8]{inputenc}
\usepackage{amsmath, amsfonts, amssymb, amsthm}
\usepackage{fullpage, mathabx, color, bbm}
\RequirePackage[colorlinks,citecolor=blue,urlcolor=blue,linkcolor=blue,linktocpage=true]{hyperref}
\usepackage{mathtools}
\mathtoolsset{showonlyrefs}
\usepackage{natbib}
\newtheorem{lem}{Lemma}
\newtheorem{thm}{Theorem}

\title{High-dimensional CLT for Sums of Non-degenerate Random Vectors: $n^{-1/2}$-rate}
\author{Arun Kumar Kuchibhotla and Alessandro Rinaldo}
\date{Carnegie Mellon University\\\texttt{\{arunku, arinaldo\}@cmu.edu}}

\begin{document}

\maketitle
\begin{abstract}
    In this note, we provide a Berry--Esseen bounds for rectangles in high-dimensions when the random vectors have non-singular covariance matrices. Under this assumption of non-singularity, we prove an $n^{-1/2}$ scaling for the Berry--Esseen bound for sums of mean independent random vectors with a finite third moment. The proof is essentially the method of compositions proof of multivariate Berry--Esseen bound from~\cite{senatov2011normal}. Similar to other existing works~\citep{kuchibhotla2018high,fang2020high}, this note considers the applicability and effectiveness of classical CLT proof techniques for the high-dimensional case. 
\end{abstract}
\section{Introduction}\label{sec:introduction}


The accuracy of the central limit theorem in growing and even in infinite dimensions is a classic topic in probability theory that has been extensively studied since at least the 1950's; see~\cite{paulauskas2012approximation,MR643968} for a historical account of central limit theorems.

In recent years, there has been a renewed impetus to derive sharp Berry-Esseeen bounds for Gaussian and bootstrap approximations in Euclidean spaces with explicit dependence on the dimension as tools to demonstrate the validity and effectiveness of statistical inference for high-dimensional models. In a seminal contribution, \cite{bentkus03} proved a Berry-Esseen error rate of order $p^{7/4}n^{-1/2}$ over the class of convex sets for isotropic distributions with bounded third moments \citep[see][for improvements]{raic2019,fang2020large}. While this result in principle allows for a central limit theorem in growing dimensions, its applicability to high-dimensional problems is relatively limited. 
\cite{CCK13} obtained a Berry-Esseeen bound of order $(\log(np))^{7/8} n^{-1/8}$ for the smaller class of $p$-dimensional hyper-rectangles under appropriate conditions; also see~\cite{MR1115160}. This important result implies a central limit guarantee that holds even when the dimension $p$ is much larger then $n$ and has led to significant and  important application in high-dimensional and non-parametric statistics: see, e.g., \cite{belloni2018high}. Subsequently, several authors generalized the settings of \cite{CCK13}  and improved on the original rate  both in terms of dependence on the dimension $p$ and of the exponent for the sample size: see, in particular, \cite{deng2017beyond,CCK17,kuchibhotla2018high,koike2019high,koike2019notes}. For some time, it was conjecture that the best dependence on the sample size, demonstrated by~\cite{CCK17}, should be of order $n^{-1/6}$, matching  the optimal sample size dependence for general Banach spaces found  by \cite{bentkus1987}.

Recently, \cite{chernozhukov2019improved} managed to achieve a better dependence on the sample size of order $n^{-1/4}$ for sub-Gaussian vectors while only requiring $\log(ed) = o(n^{1/5})$. Next, \cite{fang2020high},~\cite{das2020central}, and~\cite{lopes2020central} succeeded in going beyond a $n^{-1/4}$ dependence.  The most noticeable difference of these latest contributions compared to the earlier papers is that the random vectors are assumed to be non-degenerate, i.e., the minimum eigenvalue of the covariance matrix is bounded away from zero.  In detail, \citet[Corollary 1.3]{fang2020high} using the Stein's method improved the dependence on the sample size to $n^{-1/3}$ and the requirement on the dimension to $\log(ed) = o(n^{1/4})$. \citet[Corollary 1.1]{fang2020high} proved an $n^{-1/2}$ dependence on the sample size and the dimension requirement of $\log(ed) = o(n^{1/3})$, when the random vectors are log-concave. 
\cite{das2020central} established a $n^{-1/2}$ dependence when the random vectors have independent and sub-Gaussian coordinates (among other assumptions); they also study the optimal dependence on the dimension. However, the assumptions of log-concavity and independence are fairly restrictive. To date, the best result is contained in the breakthrough contribution of \cite{lopes2020central}, who exhibited for this problem a nearly  $n^{-1/2}$ rate of convergence only assuming non-degeneracy and independent and identically distributed sub-Gaussian vectors.

In this paper, we improve on the result of~\cite{lopes2020central} by deriving  an $n^{-1/2}$ rate under the weakest conditions possible and for independent but not necessarily identically distributed  sums, thus establishing the current state of the art. The proof is different from those exploited so far and relies on a non-trivial adaptation of the proof of~\cite{senatov1981uniform}, which itself is a method of compositions proof of the multivarite Berry--Esseen bound; see~\cite{bentkus2000accuracy}.
The main contributions of this paper is the derivation of Berry--Esseen bound for the Gaussian approximation of the distribution of sum of independent random vectors $X_1, \ldots, X_n$ over the class of $p$-dimensional hyper-rectangles
\begin{enumerate}
    \item assuming non-degeneracy of $X_1, \ldots, X_n$, i.e., $\mathbb{E}[X_iX_i^{\top}]$ is non-singular for all $i$;
    \item assuming finite third moment on the random vectors, i.e., $\max_i \mathbb{E}[\|X_i\|_{\infty}^3] < \infty$;
    \item allowing for non-identical distributions for $X_1, X_2, \ldots, X_n$.
\end{enumerate}
The last two aspects offer improvements over~\cite{lopes2020central}. Our motivation for considering random vectors with finite third moment stems from ``multiplier random vectors'' that are commonly encountered in high-dimensional regression problems. For instance, in the linear regression model $Y_i = X_i^{\top}\beta_0 + \xi_i$, estimation and inference for $\beta_0$ depends crucially on $\sum_{i=1}^n \xi_i X_i\in\mathbb{R}^p$. If the errors are heavy-tailed in that either $\mathbb{E}[|\xi_i|^{3+\delta}] < \infty$ or $\mathbb{E}[|\xi_i|^3|X_i] < \infty$ holds with sub-Gaussian or bounded $X_i$'s, then $\mathbb{E}[|\xi_i|^3\|X_i\|_{\infty}^{3}] < \infty$. In other words, $\xi_iX_i, 1\le i\le n$ are not sub-Gaussian.  See, in particular, \cite{clt.projection}.


The article is organized as follows. In Section~\ref{sec:main-result-CLT}, we state our main CLT result along with some discussion of its optimality. In Section~\ref{subsec:sketch}, we provide a sketch of the proof for first reading. In Section~\ref{sec:discussion}, we compare our result to that of~\cite{lopes2020central}. 
In Section~\ref{sec:conclusion}, we summarize the article and discuss some future directions. In Section~\ref{sec:notation-aux-lemmas}, we provide  auxiliary lemmas used in our proof and the complete proof of our main result is given in Section~\ref{subsec:proof-of-Theorem}.  

The proofs of all the results except the main result are provided in the appendices (Appendix~\ref{appsec:repeat-Thm-Senatov}--\ref{appsec:proof-of-optimality-third-moment}).
\paragraph{Notation.} We use the metric between the distribution of two random vectors $U$ and $V$ in $\mathbb{R}^p$ defined by
\begin{equation}\label{eq:metric-rectangles}
\mu(U, V) := \sup_{r\in\mathbb{R}^p}|\mathbb{P}(U \preceq r) - \mathbb{P}(V \preceq r)|,
\end{equation}
where for two $p$-dimensional vectors $x=(x(1),\ldots,x(p))$ and $y=(y(1),\ldots, y(p))$, $x \preceq y$ signifies that $x(j) \leq y(j)$ for all $j$. It is clear that $\mu(U, V)$ satisfies the triangle inequality and $\mu(t U, t V) = \mu(U, V)$ for all $t > 0$ (see Lemma~\ref{lem:regular-homogenity} below). We also use the ideal metric of order $3$ between probability distribution  \citep[see][Section 2.10]{senatov2011normal}, which is defined by
\begin{equation}\label{eq:ideal-metric-3}
\zeta_3(U, V) ~:=~\sup_{\substack{f:\mathbb{R}^p\to\mathbb{R},\|\nabla^3f(x)\|_1 \le 1\forall x}}|\mathbb{E}[f(U)] - \mathbb{E}[f(V)]|.
\end{equation}
Here $\|\nabla^3f(x)\|_1 = \sup_{\|h\|_{\infty} \le 1}\left|\partial^3 f(x + th)/\partial t^3\right|\big|_{t = 0}.$ It should be mentioned that the classical ideal metric in Euclidean spaces is defined with respect to the Euclidean norm instead of $\|\cdot\|_1-\|\cdot\|_{\infty}$ norms. For the high-dimensional case, our formulation is more suitable.

Our bounds will be presented in terms of pseudo-moments. For any two random vectors $U, V\in\mathbb{R}^p$ with probability measures $P_U, P_V$, respectively, define the pseudo-moments of order $1$ and $3$ as, respectively,
\begin{equation}\label{eq:psuedo-moments}
\nu_1(U, V) = \int \|x\|_{\infty}|P_U - P_V|(dx),\quad\mbox{and}\quad \nu_3(U, V) = \int \|x\|_{\infty}^3|P_U - P_V|(dx).
\end{equation}
For any vector $x\in\mathbb{R}^p$, we use $x(j)\in\mathbb{R}^p$ to denote the $j$-th coordinate of $x$ and $\|x\|_{\infty}$ to denote the $\ell_{\infty}$-norm: $\max_{1\le j\le p}|x(j)|$. The vector $\mathbf{1}\in\mathbb{R}^p$ represents the vector of all $1$'s. When taking the logarithm of the dimension or of the sample size, we  write $\log(ep)$ or $\log(en)$ so that the bound would be non-zero even if $p = 1$ or $n = 1$. For any two vectors $a, b\in\mathbb{R}^p$, we write $a \preceq b$ to denote $a(j) \le b(j), 1\le j\le p$, which is coordinate-wise inequality. For a positive definite matrix $A\in\mathbb{R}^{p\times p}$, $\lambda_{\min}(A)$ denotes the minimum eigenvalue of $A$. For any event $B$, we use $\mathbbm{1}\{B\}$ to denote the corresponding indicator function, which is one if $B$ holds and zero otherwise.


\section{Main Result}\label{sec:main-result-CLT}
We now present the main result of this paper: a Berry--Esseen bound for the normalized sums of independent random vectors over the class of $p$-dimensional  hyper-rectangles.
\begin{thm}\label{thm:main-theorem-CLT}
Let $X_1, X_2, \ldots, X_n$ be independent, centered random vectors in $\mathbb{R}^p$. Define
\[
\underline{\sigma}^2 := \min_{1\le i\le n}\lambda_{\min}(\mathbb{E}[X_i X_i^{\top}])
\quad\mbox{and}\quad 
\sigma_{\min}^2 := \min_{1\le i\le n}\min_{1\le j\le p}\mathbb{E}[X_i^2(j)],
\]
as the smallest minimum eigenvalue of the covariance matrices $\mathbb{E}[X_i X_i^{\top}]$'s and the smallest minimum variance of the coordinates of the $X_i$'s, respectively.
Let $Y_1, Y_2, \ldots, Y_n$ be independent centered Gaussian random vectors in $\mathbb{R}^p$
such that  $\mathbb{E}[Y_iY_i^{\top}] = \mathbb{E}[X_iX_i^{\top}]$ for all $i$. Define the maximal pseudo-moments of the two sequence of random vectors as
\begin{equation}\label{eq:maximal-pseudo-moments}
\nu_{j,n} ~:=~ \max_{1\le i\le n}\nu_j(X_i, Y_i),\quad\mbox{for}\quad j = 1, 3.
\end{equation}
Then, there exists a universal constant $\mathfrak{C} \ge 1$ such that 
\begin{align*}
\mu\left(\frac{1}{\sqrt{n}}\sum_{i=1}^n X_i,\,\frac{1}{\sqrt{n}}\sum_{i=1}^n Y_i\right) &\le \frac{3}{\sqrt{n}} + \mathfrak{C}\frac{\nu_{1,n}}{\sqrt{n}\sigma_{\min}}\log(ep)\sqrt{\log(pn)}\\ 
&+\mathfrak{C}\frac{\nu_{3,n}\log^2(ep)\sqrt{\log(pn)}}{\sqrt{n}\sigma_{\min}\underline{\sigma}^2}\left\{\frac{\sigma_{\min}/\underline{\sigma}}{\log(ep)} + \log\left(1 + \frac{\underline{\sigma}^3\sqrt{n/\log^3(ep)}}{2\mathfrak{C}\nu_{3,n}}\right)\right\}.
\end{align*}
\end{thm}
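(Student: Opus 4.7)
My plan is to follow the classical method of compositions for the ideal metric $\zeta_3$ of order $3$, adapted to hyper-rectangles in high dimensions as in~\cite{senatov2011normal}. Write $S_n := n^{-1/2}\sum_{i=1}^n X_i$ and $T_n := n^{-1/2}\sum_{i=1}^n Y_i$. First, I would replace each rectangle indicator $\mathbbm{1}\{x \preceq r\}$ by a $C^3$ upper or lower envelope $f_{r,\delta}^{\pm}$, built as a tensor product of smoothed univariate step functions of bandwidth $\delta > 0$. Using the auxiliary lemmas of Section~\ref{sec:notation-aux-lemmas} these envelopes can be arranged so that $\|\nabla^3 f_{r,\delta}^{\pm}\|_1 \le C\log^2(ep)/\delta^3$ and agree with the indicator outside a coordinate band of width $\delta$.

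Second, I would bound $|\mathbb{E}[f_{r,\delta}^{\pm}(S_n) - f_{r,\delta}^{\pm}(T_n)]|$ via the ideal metric. By the definition of $\zeta_3$, this expression is at most $\|\nabla^3 f_{r,\delta}^{\pm}\|_1 \cdot \zeta_3(S_n, T_n)$. The two indispensable properties of $\zeta_3$ are semi-additivity under independent sums, $\zeta_3(U+W, V+W) \le \zeta_3(U,V)$, and homogeneity $\zeta_3(tU,tV) = t^3\zeta_3(U,V)$. Telescoping the $n$ one-at-a-time swaps between the $X_i$'s and $Y_i$'s, and expanding by a third-order Taylor expansion (using that $X_i$ and $Y_i$ share their first two moments), yields $\zeta_3(X_i, Y_i) \le C\nu_3(X_i, Y_i)$ and hence $\zeta_3(S_n, T_n) \le C\nu_{3,n}/\sqrt{n}$. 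Transferring the smoothed bound back to rectangles requires bounding the probability that $S_n$ lies within $\delta$ of some coordinate of $r$. For $T_n$, Nazarov's Gaussian anti-concentration gives $C\delta\log(ep)/\underline{\sigma}$; for $S_n$, I would write any coordinate band as a signed combination of at most $2p$ half-spaces and self-close the resulting inequality in terms of $\mu(S_n,T_n)$ itself.

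The main obstacle is that a direct optimization of $\delta$ in the two competing bounds only yields an $n^{-1/8}$ rate, and sharpening it to $n^{-1/2}$ is delicate. Following~\cite{senatov1981uniform}, I would dispense with the artificial smoothing kernel in most swaps and instead exploit the Gaussian noise already present in the partial sum. At swap $i$ one compares $S_n^{(<i)} + n^{-1/2}X_i + T_n^{(>i)}$ with $S_n^{(<i)} + n^{-1/2}Y_i + T_n^{(>i)}$, where $S_n^{(<i)} := n^{-1/2}\sum_{j<i}X_j$ and $T_n^{(>i)} := n^{-1/2}\sum_{j>i}Y_j$. The auxiliary function $h_i(z) := \mathbb{E}[\mathbbm{1}\{S_n^{(<i)} + n^{-1/2}z + T_n^{(>i)} \preceq r\}]$ is automatically smooth because it is a convolution with the Gaussian $T_n^{(>i)}$, whose covariance has minimum eigenvalue at least $(n-i)\underline{\sigma}^2/n$. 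The key technical estimate is $\|\nabla^3 h_i\|_1 \le C\log^2(ep)/((n-i)^{3/2}\underline{\sigma}^3)$, obtained from a careful analysis of the third derivatives of a Gaussian CDF evaluated against rectangle indicators, with only polylogarithmic dependence on $p$; summing over $i \le n-k$ produces a contribution of order $\nu_{3,n}\log^2(ep)/(\underline{\sigma}^3\sqrt{k})$.

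The final $k$ swaps, for which $T_n^{(>i)}$ is too degenerate for the convolution trick to apply, are handled separately by a direct smoothing argument on the short sum of $k$ remaining vectors combined with a first-moment truncation, producing the $\nu_{1,n}$ term in the statement. Optimizing the cutoff $k$ against $n$ produces the characteristic factor $\log(1 + \underline{\sigma}^3\sqrt{n/\log^3(ep)}/\nu_{3,n})$, while the extra $\sqrt{\log(pn)}$ factors arise from union bounds needed to control $\|X_i\|_\infty$ in the truncation step and to convert per-coordinate anti-concentration into a bound over the full $2p$-faced band. I expect the crux of the argument to be precisely the smoothness estimate for $h_i$ with polylog-only dependence on $p$---this is where the high-dimensional setting forces a genuine adaptation of the classical Senatov scheme, since a naive bound on the third derivatives of a $p$-dimensional Gaussian CDF would produce $p^3$ rather than $\log^2 p$.
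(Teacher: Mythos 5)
Your setup matches the paper's in several respects: Gaussian/band smoothing, the ideal metric $\zeta_3$ with its semi-additivity and order-$3$ homogeneity, the bound $\zeta_3(S_n,T_n)\le C\nu_{3,n}/\sqrt n$, the use of the already-accumulated Gaussian part of the partial sum as the smoother at each Lindeberg swap, and the polylogarithmic (rather than $p^3$) third-derivative bound for Gaussian-smoothed rectangle indicators (this is the paper's Lemma~\ref{lem:mu-zeta-combination}, via Bentkus's estimate, not a new difficulty). You also correctly diagnose that the naive balance gives $n^{-1/8}$. But your proposed fix does not close. The swaps for which the accumulated Gaussian is degenerate are unavoidable in any ordering, and your plan for them fails quantitatively. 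Concretely: your first block of swaps contributes $\nu_{3,n}\log^{3/2}(ep)\underline{\sigma}^{-3}\sum_{\ell\ge k}\ell^{-3/2}\asymp \nu_{3,n}\log^{3/2}(ep)/(\underline{\sigma}^3\sqrt k)$, which is $O(n^{-1/2})$ only if $k\asymp n$; but then your residual block term compares $S^{(\le n-k)}(X)+T^{(>n-k)}(Y)$ with $S_n$, whose \emph{common} part consists entirely of non-Gaussian $X$'s, so it must be handled by artificial smoothing at some scale $\delta$, costing $k\nu_{3,n}\log^{3/2}(ep)/(n^{3/2}\delta^3)+\delta\log(ep)/\sigma_{\min}$. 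Optimizing over $\delta$ and then over $k$ yields at best an $n^{-1/4}$ rate; no cutoff $k$ recovers $n^{-1/2}$. (Note the paper's analogous block term is oriented the other way: it compares the half-$X$/half-$Y$ configuration against all-$Y$, so the common part \emph{is} a Gaussian with variance $\ge\underline{\sigma}^2/2$ and the block is dispatched by $\zeta_3$ alone at cost $\nu_{3,n}\log^{3/2}(ep)/(\sqrt n\,\underline{\sigma}^3)$.)

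The missing idea is the paper's Recursion Lemma (Lemma~\ref{lem:TV-alternative-lemma}) combined with induction on the sample size. For a swap smoothed only at scale $\varepsilon_j$, one does not pay the full $\nu_3\,\varepsilon_j^{-3}\log^{3/2}(ep)$: the third derivative of $\varphi_{\varepsilon_j}$ is of that size only on a band of width $\asymp\varepsilon_j\sqrt{\log(pn)}$ around the rectangle boundary, and the probability that the common part $W$ of the swap lands in that band is bounded by $\mu(W,W')$ (comparison with its Gaussian surrogate) plus $C\varepsilon_j\sqrt{\log(pn)\log(ep)}/\sigma_{\min}$ (Nazarov anti-concentration). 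This converts the per-swap cost into $\varepsilon_j^{-3}\nu_3\,\mu(S_{1:k}(X),S_{1:k}(Y))+\varepsilon_j^{-2}\nu_3\cdot(\text{anti-concentration})$, i.e., one power of $\varepsilon_j$ is gained at the price of a factor that is itself the quantity being bounded at an earlier index. Feeding the induction hypothesis $\mu(S_{1:k}(X),S_{1:k}(Y))\le\tau/\sqrt k$ into the resulting recursive inequality and choosing $\varepsilon\asymp\{\nu_{1,n}\sqrt{\log(ep)}+\nu_{3,n}\log^{3/2}(ep)/\underline{\sigma}^2\}/\sqrt n$ is what makes the degenerate swaps affordable and produces the $n^{-1/2}$ rate, the $\nu_{1,n}$ term (from using the first-moment version of the lemma at the single swap $j=0$), and the logarithmic factor $\log(1+\underline{\sigma}^3\sqrt n/(\mathfrak{C}\nu_{3,n}\log^{3/2}(ep)))$ from $\sum_j\varepsilon_j^{-2}$. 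Your only self-referential step is a single fixed-point in $n$ for transferring anti-concentration from $T_n$ to $S_n$, which is not a substitute for this per-swap recursion over all $k\le n$.
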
 
The most important aspect of Theorem~\ref{thm:main-theorem-CLT} is that the dependence on the sample size in the bound is at most $\sqrt{\log(en)/n}$. The exact dependence on the dimension $p$ depends on the growth rate of the quantities $\nu_{1,n}$ and $\nu_{3,n}$ defined in~\eqref{eq:maximal-pseudo-moments}. When the random vectors are sub-Gaussian, then $\nu_{j,n} \lesssim \log(np)^{j/2}$, $j \in \{1,3\}$.

Note that the upper bound from Theorem~\ref{thm:main-theorem-CLT} is finite only when $\max_i \nu_3(X_i, Y_i) < \infty$ and this requires the random vectors $X_i$ to have a finite third moment. A dependence of $n^{-1/2}$ on the sample size cannot be obtained if the random vectors have less than three moments. Indeed, it is known that, even in the univariate case, with only $2 + \delta$ moments (for $0 < \delta \le 1$), the scaling in $n$ can at best be $n^{-\delta/2}$; see~\cite{katz1963note},~\cite{heyde1967influence}, and~\citet[Section 4.6]{senatov2011normal} for details. In this sense, the assumption of $\nu_3(X_i, Y_i) < \infty$ is the weakest possible to achieve a $n^{-1/2}$ rate. If we only assume $\mathbb{E}[\|X_i\|_{\infty}^{2 + \delta}] < \infty$ (for $\delta\in(0,1]$), then the calculations from~\cite{senatov1981uniform} can be used to prove an $n^{-\delta/2}$ rate in Theorem~\ref{thm:main-theorem-CLT}. 

Regarding the proof technique, we adapt the arguments used in the proof of Theorem 5.1.1 of~\cite{senatov2011normal}, which is a multivariate Berry-Esseen bound using the method of compositions~\citep{sweeting1977speeds,bergstrom1945central,senatov1981uniform}. 
Specifically, Theorem 5.1.1 of~\cite{senatov2011normal} consider general probability metrics $\mu(\cdot, \cdot)$ satisfying the four conditions stated there. Although we can verify all these assumptions for our metric~\eqref{eq:metric-rectangles}, the assumption (5.1.2) involves the total variation distance and cannot be controlled effectively in the high-dimensional case. For this reason, we provide an alternative to (5.1.2) of~\cite{senatov2011normal} as Lemma~\ref{lem:TV-alternative-lemma} in Section~\ref{sec:notation-aux-lemmas}. (For the convenience of the reader, we repeat the statement of Theorem 5.1.1 of~\cite{senatov2011normal} in Appendix~\ref{appsec:repeat-Thm-Senatov}.) 



\paragraph{Optimality in the Dependence of $\sigma_{\min}$ and $\underline{\sigma}$.} It is of interest to investigate if the dependence on $\sigma_{\min},\, \underline{\sigma}$ in the bound of Theorem~\ref{thm:main-theorem-CLT} can be replaced by some other characteristic of the covariance matrices. Below, we show  that in the worst case the dependence on the minimum eigenvalue or the minimum variance cannot be removed. This property has been discussed by several authors and in particular by~\cite{senatov1986four}. Our result below is also proved using the same construction. 
\begin{thm}\label{thm:optimal-dependence-sigma-min}
Suppose for any $p \ge 1$ and a sequence of independent and identically distributed random vectors $X_1, \ldots, X_n\in\mathbb{R}^p$, we have a bound of the form
\[
\mu\left(\frac{1}{\sqrt{n}}\sum_{i=1}^n X_i,\,\frac{1}{\sqrt{n}}\sum_{i=1}^n Y_i\right) ~\le~ \frac{\nu_1(X, Y)}{\sqrt{n}\sigma(\mathbb{E}[XX^{\top}])}\log^{\alpha_1}(ep) ~+~ \frac{\nu_3(X, Y)}{\sqrt{n}\sigma^3(\mathbb{E}[XX^{\top}])}\log^{\alpha_2}(ep),
\]
for some $\alpha_1, \alpha_2 \ge 0$ and functional $\sigma(\mathbb{E}[XX^{\top}])$ of the covariance matrix of $X$. 
Then there exists a distribution for $X$ such that $\sigma(\mathbb{E}[XX^{\top}])$ is of the same order as (a) $\sigma_{\min}^{1/3}\underline{\sigma}^{2/3}$, (b) $\sigma_{\min}$, and (c) $\underline{\sigma}$.
\end{thm}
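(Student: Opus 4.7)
My plan is, for each of (a)--(c), to produce an explicit i.i.d.\ distribution for $X_1, \ldots, X_n$ in a small fixed dimension $p \in \{1, 2\}$ (so that the $\log^{\alpha_j}(ep)$ factors in the hypothesized bound are absorbed into constants), derive a lower bound on $\mu\bigl(n^{-1/2}\sum_i X_i,\, n^{-1/2}\sum_i Y_i\bigr)$ by projecting the sum onto a single coordinate and invoking the classical univariate Berry--Esseen lower bound for symmetric Bernoulli summands, compute $\nu_1$ and $\nu_3$ directly, and finally substitute into the hypothesized inequality to extract the claimed upper bound on $\sigma(\mathbb{E}[XX^{\top}])$.

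For (c), I take $p = 1$ and $X_i = \underline{\sigma}\,\xi_i$ with $\xi_i$ i.i.d.\ symmetric Bernoulli. Then $\sigma_{\min} = \underline{\sigma}$, while $\nu_1 \asymp \underline{\sigma}$ and $\nu_3 \asymp \underline{\sigma}^3$ (since $X_i$ and $Y_i$ are mutually singular), and the univariate Berry--Esseen lower bound gives $\mu \ge c/\sqrt{n}$. The hypothesis reduces to $c \lesssim \underline{\sigma}/\sigma + \underline{\sigma}^3/\sigma^3$, which forces $\sigma \lesssim \underline{\sigma}$.

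For (b), I take $p = 2$ and $X_i = \bigl(\sigma_{\min}\xi_i,\, \sigma_{\min}\xi_i + \sqrt{2}\,\underline{\sigma}\,\eta_i\bigr)$ with $\xi_i, \eta_i$ independent symmetric Bernoullis and $\underline{\sigma} \le \sigma_{\min}$. A direct calculation shows that the smallest eigenvalue of $\mathbb{E}[X_i X_i^{\top}]$ is of order $\underline{\sigma}^2$ while the smallest diagonal entry is $\sigma_{\min}^2$. Since $\|X_i\|_{\infty} \le 2\sigma_{\min}$, the pseudo-moments satisfy $\nu_1 \lesssim \sigma_{\min}$ and $\nu_3 \lesssim \sigma_{\min}^3$; the projection of $n^{-1/2}\sum_i X_i$ onto the first coordinate is $\sigma_{\min}\,n^{-1/2}\sum_i\xi_i$, so the univariate Berry--Esseen lower bound again gives $\mu \ge c/\sqrt{n}$, and substitution forces $\sigma \lesssim \sigma_{\min}$.

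For (a), I follow the construction of \cite{senatov1986four}: a calibrated two-dimensional distribution (essentially a rare-atom perturbation superimposed on a Gaussian-like core) for which $\nu_1 \asymp \sigma_{\min}^{1/3}\underline{\sigma}^{2/3}$ and $\nu_3 \asymp \sigma_{\min}\underline{\sigma}^{2}$ while $\sigma_{\min}$ and $\underline{\sigma}$ retain their prescribed orders. At these values, the two terms in the hypothesized bound balance exactly at $(\nu_3/\nu_1)^{1/2} = \sigma_{\min}^{1/3}\underline{\sigma}^{2/3}$, so the Berry--Esseen lower bound $\mu \ge c/\sqrt{n}$ forces $\sigma \lesssim \sigma_{\min}^{1/3}\underline{\sigma}^{2/3}$. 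The main technical obstacle is precisely this construction for (a): unlike the clean Bernoulli examples used for (b) and (c), one must simultaneously tune four moment-type quantities ($\sigma_{\min}, \underline{\sigma}, \nu_1, \nu_3$) to the prescribed orders, and then verify that the univariate Berry--Esseen lower bound on the one-coordinate projection survives the mixing.
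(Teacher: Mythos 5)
Your arguments for parts (b) and (c) are sound: the scaled symmetric-Bernoulli examples, the computation $\nu_j \asymp (\text{scale})^j$ (using that the discrete and Gaussian laws are mutually singular), the one-coordinate projection, and the classical univariate Berry--Esseen lower bound $\mu \ge c/\sqrt{n}$ all go through, and substituting into the hypothesized bound does force $\sigma \lesssim \underline{\sigma}$ and $\sigma \lesssim \sigma_{\min}$ respectively. But part (a) is a genuine gap as written: you do not exhibit the distribution, you only list the four moment-type quantities it would need to have and defer the construction to \cite{senatov1986four}, explicitly calling it ``the main technical obstacle.'' A proof cannot rest on an unverified calibration of $\sigma_{\min}, \underline{\sigma}, \nu_1, \nu_3$ to four prescribed orders plus an unverified Berry--Esseen lower bound for the resulting mixture. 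The missing observation --- which is the first line of the paper's proof --- is that no such calibrated construction is needed: whenever the covariance matrix is diagonal one has $\sigma_{\min} = \underline{\sigma}$, hence $\sigma_{\min}^{1/3}\underline{\sigma}^{2/3} = \sigma_{\min} = \underline{\sigma}$, and a single example settles (a), (b) and (c) simultaneously. In particular your own one-dimensional example from part (c) already proves all three claims; your two-dimensional construction for (b) and the entire discussion of (a) are unnecessary.

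For comparison, the paper takes a different route to the lower bound on $\mu$: it puts a rare large atom in one coordinate, $\mathbb{P}(X(p)=\pm\gamma^{1/3}) = 1/(2\gamma)$ with $\gamma = n$, so that the sum retains an atom at zero of mass $(1-1/n)^n \gtrsim 1$ and hence $\mu \gtrsim 1$ (a constant, not merely $c/\sqrt{n}$), while $\sigma_{\min} = \underline{\sigma} = \gamma^{-1/6} = n^{-1/6}$ and $\nu_1, \nu_3$ are only polylogarithmic in $p$; the hypothesized bound then forces $\sigma \lesssim n^{-1/6} = \sigma_{\min} = \underline{\sigma} = \sigma_{\min}^{1/3}\underline{\sigma}^{2/3}$. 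Your Bernoulli route, where the smallness of the scale is fed directly into $\nu_1$ and $\nu_3$ and the lower bound is the standard $n^{-1/2}$ one, is an equally legitimate and arguably more elementary mechanism; it just needs to be paired with the diagonal-covariance observation to cover part (a).
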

\begin{proof}
See Appendix~\ref{appsec:proof-of-theorem-optimal-dependence-sigma-min} for a proof. The proof uses the probability distribution constructed in Example 1 of~\cite{senatov1986four}. 
\end{proof}

\paragraph{Optimality in the Dependence on $\mathbb{E}[\|X\|_{\infty}^3]$.} We now consider the dependence on the third moments of the random vectors. The ideas of~\cite{nagaev1976estimate} coupled with the example 1 of`\cite{senatov1986four} yields the following result.
\begin{thm}\label{thm:optimal-dependence-on-third-moment}
There exists a distribution $P_X$ on $\mathbb{R}^p$ such that the following holds true: if $X_1, X_2, \ldots, X_n\in\mathbb{R}^p$ are independent random vectors from $P_X$, then there exists a constant $C$ (independent of $p, n$) such that
\[
\mu\left(\frac{1}{\sqrt{n}}\sum_{i=1}^n X_i,\,\frac{1}{\sqrt{n}}\sum_{i=1}^nY_i\right) ~\ge~ C\frac{n^{-1}\sum_{i=1}^n \mathbb{E}[\|X_i\|_{\infty}^3]}{\sqrt{n}\sigma_{\min}\underline{\sigma}^2},
\]
whenever $\log^3(ep) \le n\sigma_{\min}^2\underline{\sigma}^4$.
\end{thm}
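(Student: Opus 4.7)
The plan is to combine the univariate characteristic-function lower bound for Berry--Esseen in~\cite{nagaev1976estimate} with the covariance construction of Example~1 in~\cite{senatov1986four}, the same one invoked in the proof of Theorem~\ref{thm:optimal-dependence-sigma-min}. The target is an explicit law $P_X$ on $\mathbb{R}^p$ whose Gaussian-approximation error over hyperrectangles matches the cubic denominator $\sigma_{\min}\underline{\sigma}^2$ appearing in the claim; the main idea is that Senatov's example can be built so that a single near-singular direction of the covariance is accessible through a carefully chosen rectangle, and Nagaev's argument then extracts from the univariate projection along that direction a lower bound of the correct third-moment order.

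Concretely, I would take $P_X$ to be the discrete, symmetric, lattice-valued law from~\cite{senatov1986four}: a block of coordinates whose covariance has minimum eigenvalue $\underline{\sigma}^2$ strictly smaller than the minimum diagonal entry $\sigma_{\min}^2$, with the ratio $\sigma_{\min}/\underline{\sigma}$ tunable to achieve $\sigma_{\min}\underline{\sigma}^2 \asymp s^3$ for a prescribed scale $s = \sigma_{\min}^{1/3}\underline{\sigma}^{2/3}$. The support is chosen so as to concentrate the $\ell_\infty$ third moment in the direction probed by rectangles, so that the projection $\pi(X)$ onto the near-singular eigendirection has variance $\asymp s^2$ and $\mathbb{E}|\pi(X)|^3 \asymp \mathbb{E}\|X\|_\infty^3$. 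Lower-bounding $\mu$ in~\eqref{eq:metric-rectangles} by the probability discrepancy on a single rectangle $R^\star$ that approximates $\{\pi(x) \le t\}$, I would then invoke the univariate lower bound of~\cite{nagaev1976estimate} applied to the i.i.d.\ sum $\tfrac{1}{\sqrt{n}}\sum_{i=1}^n \pi(X_i)$:
\[
\sup_{t\in\mathbb{R}}\bigl|\mathbb{P}\bigl(\tfrac{1}{\sqrt{n}}\sum_{i=1}^n \pi(X_i) \le t\bigr) - \mathbb{P}\bigl(\tfrac{1}{\sqrt{n}}\sum_{i=1}^n \pi(Y_i) \le t\bigr)\bigr| \,\ge\, c\,\frac{\mathbb{E}|\pi(X_1)|^3}{\sqrt{n}\,s^3} \,\asymp\, \frac{\mathbb{E}\|X\|_\infty^3}{\sqrt{n}\,\sigma_{\min}\underline{\sigma}^2}.
\]
The hypothesis $\log^3(ep) \le n\sigma_{\min}^2\underline{\sigma}^4$ is precisely $ns^6 \gtrsim \log^3(ep)$, which is the largeness condition needed for the error terms in Nagaev's expansion to be negligible compared to the main cubic term.

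The main obstacle is the construction of the projection $\pi$ simultaneously satisfying (i) $\operatorname{Var}(\pi(X)) \asymp s^2$, which is strictly less than $\sigma_{\min}^2$, (ii) a lattice/symmetric structure for which Nagaev's lower bound applies, and (iii) approximability of $\{\pi(x) \le t\}$ by a hyperrectangle with controlled slack. A naive axis-aligned projection fails at (i) since every coordinate has variance at least $\sigma_{\min}^2$; Example~1 of~\cite{senatov1986four} circumvents this by producing a near-singular, non-axis-aligned direction that is nonetheless ``visible'' through an intersection of coordinate half-spaces. Controlling the slack in this rectangle approximation and tracking the $s^{-3}$ dependence through Nagaev's characteristic-function expansion is the main technical work; the non-degeneracy required for the Gaussian counterpart of $\pi$ is immediate from $s \ge \underline{\sigma} > 0$.
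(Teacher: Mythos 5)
There is a genuine gap in your plan, and it sits exactly where you locate ``the main technical work.'' The metric $\mu$ in~\eqref{eq:metric-rectangles} only sees coordinate-aligned rectangles $\{x \preceq r\}$. A non-axis-aligned near-singular eigendirection $\pi$ gives events $\{\pi(x)\le t\}$ that are oblique half-spaces, and these are not approximable by a single rectangle with controlled slack: a rectangle is an intersection of axis-parallel half-spaces, and the symmetric difference between $\{v^\top x\le t\}$ (for generic $v$) and any rectangle carries constant probability mass under both $P_X$ and the Gaussian. So the step ``lower-bound $\mu$ by the discrepancy on a rectangle $R^\star$ approximating $\{\pi(x)\le t\}$'' does not go through, and asserting that Senatov's example makes the singular direction ``visible'' through rectangles is precisely the unproved claim your argument rests on. You are driven into this corner because you insist on a projection with variance $\asymp s^2 = \sigma_{\min}^{2/3}\underline{\sigma}^{4/3}$ strictly below $\sigma_{\min}^2$; but the theorem is only an existence statement, and nothing forces the witness to have $\underline{\sigma} < \sigma_{\min}$.

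The paper's proof avoids all of this by choosing a product law with identity covariance, so that $\sigma_{\min}=\underline{\sigma}=1$ and the relevant projection is the axis-aligned coordinate $x(p)$ (whose sublevel sets \emph{are} rectangles): $X(1),\dots,X(p-1)\sim N(0,1)$ i.i.d.\ and $X(p)$ taking values $0,\pm\gamma^{1/2}$ with probabilities $1-1/\gamma$ and $1/(2\gamma)$ each, with $\gamma=n$. Then $n^{-1/2}\sum_i X_i(p)$ has an atom at $0$ of mass at least $(1-1/n)^n$, which bounds $\mu$ below by an absolute constant, while $\mathbb{E}\|X\|_\infty^3 \lesssim (\log ep)^{3/2}+\gamma^{1/2}\lesssim \sqrt{n}$ under the hypothesis $\log^3(ep)\le n$, so the claimed right-hand side is itself $O(1)$ and the inequality follows. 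Note also that the condition $\log^3(ep)\le n\sigma_{\min}^2\underline{\sigma}^4$ is used only to ensure the Gaussian block does not dominate $\mathbb{E}\|X\|_\infty^3$, not to control error terms in a characteristic-function expansion. Your Nagaev-style route could be repaired by applying the univariate lattice lower bound to the axis-aligned coordinate $X(p)$ of this same construction (the atom argument is the degenerate $\gamma=n$ case of that bound), but as written the proposal does not constitute a proof.
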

\begin{proof}
See Appendix~\ref{appsec:proof-of-optimality-third-moment} for a proof. This shows that our bound from Theorem~\ref{thm:main-theorem-CLT} cannot be improved except for poly-logarithmic factors. This result can also be derived from Proposition 1.1 of~\cite{fang2020high} by taking the coordinate random variables there to be sub-Gaussian.
\end{proof}

\subsection{Sketch of the Proof of Theorem~\ref{thm:main-theorem-CLT}}\label{subsec:sketch}
In this section, we provide an outline of the proof of Theorem~\ref{thm:main-theorem-CLT}. The basic structure is the same as Theorem 5.1.1 of~\cite{senatov2011normal}. In the sketch and the actual proof, we use the following notation: for any $1\le k\le k'\le n$,
\begin{equation}\label{eq:notation-sums}
S_{k:k'}(X) = \frac{1}{\sqrt{n}}\sum_{i=k}^{k'} X_i\quad\mbox{and}\quad S_{k:k'}(Y) = \frac{1}{\sqrt{n}}\sum_{i=k}^{k'} Y_i.
\end{equation} 
In this notation, Theorem~\ref{thm:main-theorem-CLT} requires bounding
\[
\sup_{r\in\mathbb{R}^p}\left|\mathbb{P}\left(S_{1:n}(X) \preceq r\right) - \mathbb{P}(S_{1:n}(Y) \preceq r)\right|.
\]
\paragraph{Smoothing and Lindeberg Swapping:} The quantity we want to control concerns expectations of indicator functions, which are not smooth. For this reason, most proofs of CLTs apply a smoothing to repalce indicator functions by smooth functions. A simple way to smooth a non-smooth function by Gaussian convolution: $\mathbb{E}[\mathbbm{1}\{x + \varepsilon Z \preceq r\}]$ is a smooth function of $x$ and approximates the indicator function $\mathbbm{1}\{x \preceq r\}$; here $Z$ is a standard Gaussian random vector in $\mathbb{R}^p$. This smoothing comes at the cost of an additional remainder term, which, according to Lemma~\ref{lem:smoothing-inequality} in Section~\ref{sec:notation-aux-lemmas} is of the form
\[
\mu(S_{1:n}(X), S_{1:n}(Y)) ~\le~ C\mu(S_{1:n}(X) + \varepsilon Z,\, S_{1:n}(Y) + \varepsilon Z) + \frac{C}{\sigma_{\min}}\varepsilon\log(ep).
\]
Setting $m = [n/2]$ and using the triangle inequality,
\begin{align*}
\mu(S_{1:n}(X) + \varepsilon Z, S_{1:n}(Y) + \varepsilon Z) &\le \sum_{j=0}^m \mu(S_{1:n-j-1}(X) + S_{n-j:n}(Y) + \varepsilon Z,\, S_{1:n-j}(X) + S_{n-j+1:n}(Y) + \varepsilon Z)\\ 
&\qquad+ \mu(S_{1:n-m-1}(X) + S_{n-m:n}(Y) + \varepsilon Z,\, S_{1:n}(Y) + \varepsilon Z).
\end{align*}
The classical Lindeberg swapping methods takes $m = n$,  leading to an $n^{-1/6}$ rate; see Theorem 2.1 of~\cite{bentkus2000accuracy}. \cite{lopes2020central} also relies on Lindeberg swapping but does not deploy the smoothing inequality of Lemma~\ref{lem:smoothing-inequality}. In addition, the $m$ is set differently.

Note that the decomposition performed above is not symmetric with respect to the indices of the random vectors, although the left hand side is invariant to permutation of the indices. One way to avoid this asymmetry is to apply Lindeberg swapping for ``interpolated'' random vectors as in~\cite{knowles2017anisotropic}; also see the discussion in pages 27-28 of~\cite{tao2017least}. Another way is to take the average of the right hand side bound over all the permutations of indices as done in~\cite{deng2017beyond,deng2020slightly}. We will not pursue this direction in this paper, for simplicity.
\paragraph{Application of Regularity and Smoothness:} To bound the metric between $S_{1:n-m-1}(X) + S_{n-m:n}(Y) + \varepsilon Z$ and $S_{1:n}(Y) + \varepsilon Z$, we use the fact that $S_{n-m:n}(Y) + \varepsilon Z$ can be decomposed in to sum of two independent Gaussians, one of which having a scaled identity as the covariance matrix. This is possible because
\[
N(0, \Xi) \overset{d}{=} N(0, \lambda_{\min}(\Xi)I_p) + N(0, \Xi - \lambda_{\min}(\Xi)I_p),
\]
where the two Gaussians on the right hand side are independent. With this decomposition, we can write 
\begin{align*}
&\mu(S_{1:n-m-1}(X) + S_{n-m:n}(Y) + \varepsilon Z,\, S_{1:n}(Y) + \varepsilon Z)\\
&\qquad= \mu(S_{1:n-m-1}(X) + \varepsilon_{m+1}Z + W,\, S_{1:n-m+1}(Y) + \varepsilon_{m+1} Z + W),
\end{align*}
where $\varepsilon_{m+1}^2 = \varepsilon^2 + (m+1)\underline{\sigma}^2/n$ is the minimum eigenvalue of $S_{n-m:n}(Y) + \varepsilon Z$ and $Z$ and $W$ are independent normal random variables. Now we use the intuition that adding noise to two random vectors makes it difficult to distinguish them. Thus, under a sufficiently regular metric (such as the one we consider), the distance between the distribution of $U + R$ and $V + R$ is less than between the distributions of $U$ and $V$; this is proved in Lemma~\ref{lem:regular-homogenity}. This allows us to ignore the random vector $W$. Furthermore, $\mu(U + \varepsilon Z, V + \varepsilon Z)$ involves expectations of smooth functions, thanks to the smoothness from the Gaussian smoothing. Thus, through a Taylor series expansion, we are able to relate $\mu(U + \varepsilon Z, V + \varepsilon Z)$ to an ideal metric of order $3$ \citep[see][Section 2.10]{senatov2011normal}; this is proved in Lemma~\ref{lem:mu-zeta-combination}. 
Hence Lemmas~\ref{lem:regular-homogenity} and~\ref{lem:mu-zeta-combination} (in Section~\ref{sec:notation-aux-lemmas}) yield
\[
\mu(S_{1:n-m-1}(X) + S_{n-m:n}(Y) + \varepsilon Z,\, S_{1:n}(Y) + \varepsilon Z) \le C(\log(ep))^{3/2}\frac{\zeta_3(S_{1:n-m+1}(X), S_{1:n-m+1}(Y))}{\varepsilon_{m+1}^3}.
\]
Because $m = [n/2]$, $\varepsilon_{m+1}^2 \ge \underline{\sigma}^2/2$. Hence,
\begin{align}
    \mu(S_{1:n}(X) + \varepsilon Z, S_{1:n}(Y) + \varepsilon Z) &\le \sum_{j=0}^m \mu(S_{1:n-j-1}(X) + S_{n-j:n}(Y) + \varepsilon Z,\, S_{1:n-j}(X) + S_{n-j+1:n}(Y) + \varepsilon Z)\nonumber\\
    &\quad+ C(\log(ep))^{3/2}\frac{\zeta_3(S_{1:n-m+1}(X), S_{1:n-m+1}(Y))}{\underline{\sigma}^3}\label{eq:second-term-done}
\end{align}
In~\cite{lopes2020central}, the author controls the first term using concentration inequality without using the Lindeberg swapping. He uses Lindeberg swapping for the second term above (instead of Lemma~\ref{lem:mu-zeta-combination}).
\paragraph{Application of the Recursion Lemma:} To bound the sum on the right hand side of~\eqref{eq:second-term-done}, we note that
\begin{equation}
\begin{split}
&\mu(S_{1:n-j-1}(X) + S_{n-j:n}(Y) + \varepsilon Z,\, S_{1:n-j}(X) + S_{n-j+1:n}(Y) + \varepsilon Z)\\ 
&\qquad= \mu\left(S_{1:n-j-1}(X) + \frac{Y_{n-j}}{\sqrt{n}} + S_{n-j+1:n}(Y) + \varepsilon Z,\,S_{1:n-j-1}(X) + \frac{X_{n-j}}{\sqrt{n}} + S_{n-j+1:n}(Y) + \varepsilon Z\right).
\end{split}
\end{equation}
Because of $\varepsilon Z$ in both sums, this is the difference between the expectations of smooth functions. However, if we just use a Taylor series expansion, then the resulting bound does not yield an $n^{-1/2}$ rate. For this, we also use the fact that both sums have $S_{1:n-j+1}(X)$ in common and that $S_{1:n-j+1}(X)$ is already close to a Gaussian $S_{1:n-j+1}(Y)$. Importantly, this results in a recursion, as formalized in Lemma~\ref{lem:TV-alternative-lemma} of Section~\ref{sec:notation-aux-lemmas}. 

For $1\le j\le m$, we use inequality~\eqref{eq:zeta-3-bound} of Lemma~\ref{lem:TV-alternative-lemma}. For $j = 0$, we use inequality~\eqref{eq:zeta-1-bound}. The choice of different inequalities from Lemma~\ref{lem:TV-alternative-lemma} is done to keep the dependence of $\varepsilon$ in the upper bound of $\sum_{j=0}^m\cdots$ as $\varepsilon^{-1}$. This is one of the important reasons why this proof leads to an $n^{-1/2}$ rate. Substituting these inequalities, we obtain
\begin{align*}
    &\sum_{j=0}^m \mu(S_{1:n-j-1}(X) + S_{n-j:n}(Y) + \varepsilon Z,\, S_{1:n-j}(X) + S_{n-j+1:n}(Y) + \varepsilon Z)\\ 
    &\quad\le \frac{C\nu_{1,n}\sqrt{\log(ep)}}{\varepsilon\sqrt{n}}\mu(S_{1:n-1}(X), S_{1:n-1}(Y))\\
    &\qquad+ C(\log(ep))^{3/2}\sum_{j=1}^m \mu(S_{1:n-j+1}(X), S_{1:n-j+1}(Y))\frac{\nu_3(X_{n-j}, Y_{n-j})}{\varepsilon_j^3n^{3/2}} + F_n(\varepsilon),
\end{align*}
for a function $F_n(\varepsilon)$ that weakly depends on $\varepsilon$ such that $F_n(1/\sqrt{n})$ converges to zero at an $n^{-1/2}$ rate. 
\paragraph{Wrap-up and Induction:} Using this bound in~\eqref{eq:second-term-done} yields
\begin{align*}
    \mu(S_{1:n}(X), S_{1:n}(Y)) &\le \frac{C\nu_{1,n}\sqrt{\log(ep)}}{\varepsilon\sqrt{n}}\mu(S_{1:n-1}(X), S_{1:n-1}(Y))\\
    &\qquad+ C(\log(ep))^{3/2}\sum_{j=1}^m \mu(S_{1:n-j+1}(X), S_{1:n-j+1}(Y))\frac{\nu_3(X_{n-j}, Y_{n-j})}{\varepsilon_j^3n^{3/2}} + F_n(\varepsilon)\\ 
    &\qquad+ C(\log(ep))^{3/2}\frac{\zeta_3(S_{1:n-m+1}(X), S_{1:n-m+1}(Y))}{\underline{\sigma}^3} + \frac{C}{\sigma_{\min}}\varepsilon\log(ep).
\end{align*}
This is a recursive inequality for the sequence $\{\mu(S_{1:k}(X), S_{1:k}(Y)): 1\le k\le n\}$. This should be compared to inequality (5.1.8) of~\cite{senatov2011normal}. 
The rest of the proof follows the same steps as in~\cite{senatov2011normal}. The basic idea is to hypothesize a bound on $\mu(S_{1:k}(X), S_{1:k}(Y))$ of the form $\tau(\{X_i\}, \{Y_i\})/\sqrt{n}$, substitute this in the recursive inequality and then find the right choice of $\tau(\{X_i\}, \{Y_i\})$. We found the induction calculations to be much simpler in~\cite{senatov2011normal} compared to those in~\cite{lopes2020central}, although the basic strategy remains the same.
\section[Discussion of Miles' paper]{Comparison with the Result of~\cite{lopes2020central}}\label{sec:discussion}
In this section, we provide a comparison of our result with that of~\cite{lopes2020central}. 
Let us recall Theorem 2.1 of~\cite{lopes2020central}. Assuming independent and identically distributed random vectors $X_1, X_2, \ldots, X_n$, Theorem 2.1 of~\cite{lopes2020central} states that
\begin{equation}\label{eq:lopes-result}
\mu\left(\frac{1}{\sqrt{n}}\sum_{i=1}^n X_i,\, \frac{1}{\sqrt{n}}\sum_{i=1}^n Y_i\right) \le C\frac{\nu^{5/2}}{\rho^{3/2}}\frac{\log^4(pn)\log(en)}{\sqrt{n}},
\end{equation}
where $\nu = \max_{1\le j\le p}\|X_i(j)/\sqrt{\mbox{Var}(X_i(j))}\|_{\psi_2}$ is the sub-Gaussian norm of scaled coordinates and $\rho$ is the minimum eigenvalue of the correlation matrix of $X$.

The bound in Theorem~\ref{thm:main-theorem-CLT})  compared favourably to~\eqref{eq:lopes-result} (Theorem 2.1 of~\cite{lopes2020central}). As pointed out before, we do not require sub-Gaussianity of the random vectors $X_1, \ldots, X_n$ or even identical distributions for $X_1, \ldots, X_n$. Our result is written in terms of pseudo-moments~\eqref{eq:psuedo-moments} and, because $$\nu_1(X_i, Y_i) \le \mathbb{E}[\|X_i\|_{\infty} + \|Y_i\|_{\infty}]\quad\mbox{and}\quad \nu_3(X_i, Y_i) \le \mathbb{E}[\|X_i\|_{\infty}^3 + \|Y_i\|_{\infty}^3],$$
the result can be written in terms of the moments of $X_i$'s. Under sub-Gaussianity of $X_i(j)$'s, we get $\mathbb{E}[\|X_i\|_{\infty}^3] \lesssim (\log(ep))^{3/2}$ and $\mathbb{E}[\|X_i\|_{\infty}] \lesssim \sqrt{\log(ep)}$; the constants suppressed here depend on the sub-Gaussian norm of $X_i(j)$'s. Hence the upper bound from Theorem~\ref{thm:main-theorem-CLT} becomes of order $\log^4(epn)\log(n)/\sqrt{n}$, which matches the rate from Theorem 2.1 of~\cite{lopes2020central}. Examples can be constructed where the coordinates of the random vectors are sub-Gaussian but our bound is significantly better than that of~\cite{lopes2020central}. For instance, consider independent and identically distributed random vectors $X_1, \ldots, X_n\in\mathbb{R}^p$ with independent coordinates such that
\[
X_i(1), \ldots, X_i(p-1)\overset{iid}{\sim} N(0, 1)\;\,\mbox{and}\;\, \mathbb{P}(X_i(p) = 0) = 1 - \frac{1}{\gamma}, \mathbb{P}(X_i(p) = -\gamma^{1/3}) = \mathbb{P}(X_i(p) = \gamma^{1/3}) = \frac{1}{2\gamma}.
\]
In this case $\nu_3(X_i, Y_i)/(\sigma_{\min}\underline{\sigma}^2) \le \gamma^{1/2}(\log(ep))^{3/2}$ and hence our bound is $\gamma^{1/2}(\log(epn))^{4}/\sqrt{n}$. However, the sub-Gaussian norm of $X_i(p)/\sqrt{\mbox{Var}(X_i(p))}$ is of order $\gamma^{1/2}$. Hence, the bound from~\cite{lopes2020central} becomes $\gamma^{5/4}\log^4(pn)\log(n)/\sqrt{n}$ which is sub-optimal for $\gamma$ diverging with $n$. Interestingly, for this example, Corollary 1.3 of~\cite{fang2020high} is still optimal although it uses the sub-Gaussian norm. 

It should be mentioned that the result of~\cite{lopes2020central} is given in terms of the minimum eigenvalue of the correlation matrix but the sub-Gaussian norm of the coordinates of $X_i$'s is given with coordinates normalized by their standard deviation. In comparison, our bound is written in terms of the minimum eigenvalue of the covariance matrix and if the minimum and maximum variances of the coordinates of $X_i$'s are of the same order, then our bound can be converted to only use the minimum eigenvalue of the correlation matrix. Finally, our bound distinguishes the dependence on minimum eigenvalue and minimum variance. In particular, if $\underline{\sigma} = \sigma_{\min}/\log(ep)$, then our bound becomes $\nu_{3,n}\log^4(ep)\sqrt{\log(pn)}\log(en)/\sqrt{n}$ while Theorem 2.1 of~\cite{lopes2020central} becomes $\nu^{5/2}\log^7(pn)\log(en)/\sqrt{n}$. (In case $\mbox{Var}(X_i(j)) = \sigma_{\min}^2$ for all $1\le j\le p$, then $\rho=(\underline{\sigma}/\sigma_{\min})^2$.)
\paragraph{Remark.}
In comparing the proof techniques, we note that the use of sub-Gaussian concentration inequality in Lemma 3.1 of \cite{lopes2020central} makes it difficult to derive an $n^{-1/2}$ rate of convergence when the random vectors involved admit only three finite moments. For instance, if one replaces the sub-Gaussian concentration bound used in Lemma 3.1 of \cite{lopes2020central}  by a Fuk-Nagaev-type inequality~\citep[see, e.g., Theorem 3]{einmahl2008characterization}, then it can be shown that the analog of Lemma 3.1 of~\cite{lopes2020central} yields an $n^{-q/(2q+2)}$ dependence on the sample size when $\mathbb{E}[\|X_i\|_{\infty}^q] < \infty$ for some $q \ge 2$. As a result, a dependence of $n^{-1/2}$ on the sample size cannot hold  for any finite $q$ and will require $q = \infty$. (Fortunately, one can allow for arbitrary sub-Weibull random vectors and still get an $n^{-1/2}$ rate.) The main merit of our proof is the avoidance of bounds based on concentration inequalities, which demand for conditions stronger than necessary.

We reamrk that several other papers also rely on the sub-Gaussian assumption and the corresponding concentration inequalities, although not in the same context. See~\citet[Lemma 6.2]{chernozhukov2019improved} and~\citet[Eq. (2.15)]{fang2020high} for some examples.
\section{Conclusions and Future Directions}\label{sec:conclusion}
In this note, we established  a high-dimensional Berry--Esseen bound for the sum of independent (but possibly non-identically distributed) random vectors over the class of hyper-rectangles. Our bound only depends on pseudo-moments of orders 1, 3, and hence it is finite whenever the random vectors have finite third moments. This allows for a significant sharpening of the recent result of~\cite{lopes2020central} and is essentially not improvable.

There are, however, several aspects of our analysis that warrants further investigation.
\begin{enumerate}
    \item Our result allows for non-identically distributed random vectors, but the dependence on the pseudo-moments $\nu_1(X_i, Y_i), \nu_3(X_i, Y_i)$ is not ``right.'' Berry--Esseen bounds for non-identically distributed random vectors often only involve $n^{-1}\sum_{i=1}^n \nu_1(X_i, Y_i)$ and $n^{-1}\sum_{i=1}^n \nu_3(X_i, Y_i)$. Further, the dependence on the minimum eigenvalue and minimum variance are also supposed to be with respect to the matrix $n^{-1}\sum_{i=1}^n \mathbb{E}[X_iX_i^{\top}]$. The ideas of~\cite{tao2017least},~\cite{deng2017beyond} and~\cite{deng2020slightly} are of interest in this context.
    \item Our result and that of~\cite{lopes2020central} require $\log(epn) = o(n^{1/8})$ for the bound to converge to zero, even in the most ``favourable'' case of sub-Gaussian random vectors. In this sense, the results are  sub-optimal in terms of the dependence on the dimension. It is now known from~\cite{fang2020high} and~\cite{das2020central} that one needs $\log(ep) = o(n^{1/3})$ for log-concave random vectors and $\log(ep) = o(n^{1/2})$ for vectors with independent (and symmetric, sub-Gaussian) coordinates. Without any log-concave/independence restrictions,~\citet[Corollary 1.3]{fang2020high} only requires $\log(ep) = o(n^{1/4})$, although yielding an $n^{-1/3}$ dependence on $n$. The optimality of the exponent of $\log(ep)$ in Theorem~\ref{thm:main-theorem-CLT} is an interesting future direction to consider.
    \item Our result proves an $n^{-1/2}$ scaling, assuming finite three moments. For an $n^{-1/2}$ scaling, even in the univariate case, the condition of three finite moments for the random variables is necessary. With only $2 + \delta$ moments (for $0 < \delta \le 1$), the scaling in $n$ can at best be $n^{-\delta/2}$; see~\cite{katz1963note},~\cite{heyde1967influence}, and~\citet[Section 4.6]{senatov2011normal} for details. High-dimensional CLT under $(2 + \delta)$-moments was only considered in~\cite{kuchibhotla2018high} albeit with a sub-optimal scaling in the sample size. It would be of interest to study the bounds in our setting when only $\mathbb{E}[\|X_i\|_{\infty}^{2 + \delta}] < \infty$. This again would be useful to understand the CLT for multiplier random vectors discussed after the contributions in Section~\ref{sec:introduction}.  
\end{enumerate}

\section{Proof of Theorem~\ref{thm:main-theorem-CLT}}\label{sec:proof-of-the-main-theorem}
In this section, we provide a detailed proof of Theorem~\ref{thm:main-theorem-CLT}. We will first provide the auxiliary lemmas used in the proof. The sketch of the proof is given in Section~\ref{subsec:sketch}.
\subsection{Auxiliary Lemmas}\label{sec:notation-aux-lemmas}
The following lemmas provide high-dimensional analogs of the conditions of Theorem 5.1.1 of~\cite{senatov2011normal}. See Appendix~\ref{appsec:repeat-Thm-Senatov} for the conditions of Theorem 5.1.1 of~\cite{senatov2011normal}. The relevance of these lemmas is discussed in the sketch presented in Section~\ref{subsec:sketch} and we advise the reader to look back at the sketch when reading the lemmas below. 

In some of these results, we use the high-dimensional anti-concentration inequality from~\cite{Naz03} and~\cite{chernozhukov2017detailed}, which implies that
\[
\mathbb{P}(r \preceq N(0, \Sigma) \preceq r + \delta\mathbf{1}) ~\le~ \frac{C\sqrt{\log(ep)}}{\min_{1\le j\le p}\Sigma_{jj}^{1/2}}\times\delta,
\]
for an absolute constant $C > 0$. This is a worst-case anti-concentration inequality and can be improved either using Theorem 10 of~\cite{deng2017beyond} or using Lemma 2.2 of~\cite{koike2019notes}. The latter uses special structure on the covariance matrix $\Sigma$. The exact dependence on the anti-concentration constant (the coefficient of $\delta$) in Lemmas~\ref{lem:smoothing-inequality}--~\ref{lem:mu-zeta-combination} can be obtained from the proofs. This can reduce the exponent of $\log(ep)$ in Theorem~\ref{thm:main-theorem-CLT}.
\begin{lem}[Smoothing Inequality]\label{lem:smoothing-inequality}
Suppose $V\sim N(0, \Sigma)$ and let $U$ be another random vector in $\mathbb{R}^p$. Then for any $\varepsilon > 0$ and a standard Gaussian random vector $Z\sim N(0, I_p)$, there exists a constant $C > 0$ such that
\[
\mu(U, V) ~\le~ \mu(U + \varepsilon Z, V + \varepsilon Z) + \frac{C}{\sigma_{\min}}\varepsilon\log(ep),
\]
where ${\sigma}^2_{\min} := \min_{1\le j\le p}\mathrm{Var}(V(j))$. 
\end{lem}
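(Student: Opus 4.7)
The plan is to sandwich $P(U \preceq r)$ and $P(V \preceq r)$ between rectangle probabilities of the smoothed vectors $U + \varepsilon Z$ and $V + \varepsilon Z$ shifted by $\pm \delta \mathbf{1}$, so that the excess $P(U \preceq r) - P(V \preceq r)$ reduces to $\mu(U + \varepsilon Z, V + \varepsilon Z)$ plus a Gaussian anti-concentration contribution controlled only through the covariance of $V$. First, for fixed $r \in \mathbb{R}^p$ and an auxiliary $\delta > 0$ to be chosen, I would use the elementary inclusions
\begin{equation*}
\{U \preceq r\} \cap \{\varepsilon Z \preceq \delta \mathbf{1}\} \subseteq \{U + \varepsilon Z \preceq r + \delta \mathbf{1}\} \quad \text{and} \quad \{V + \varepsilon Z \preceq r - \delta \mathbf{1}\} \cap \{\varepsilon Z \succeq -\delta \mathbf{1}\} \subseteq \{V \preceq r\},
\end{equation*}
together with the independence of $Z$ from $U$ and $V$, to obtain
\begin{equation*}
P(U \preceq r) - P(V \preceq r) \le \bigl[P(U + \varepsilon Z \preceq r + \delta \mathbf{1}) - P(V + \varepsilon Z \preceq r - \delta \mathbf{1})\bigr] + 2 P(\|\varepsilon Z\|_\infty > \delta).
\end{equation*}

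Next, I would add and subtract $P(V + \varepsilon Z \preceq r + \delta \mathbf{1})$ to split the bracket into $\mu(U + \varepsilon Z, V + \varepsilon Z)$ and the Gaussian shell probability $P(V + \varepsilon Z \preceq r + \delta \mathbf{1}) - P(V + \varepsilon Z \preceq r - \delta \mathbf{1})$. Since $V + \varepsilon Z \sim N(0, \Sigma + \varepsilon^2 I_p)$ has minimum coordinate variance at least $\sigma_{\min}^2$, the Nazarov anti-concentration inequality recalled just before the lemma bounds the shell by $C \delta \sqrt{\log(ep)}/\sigma_{\min}$. A standard Gaussian maximum tail estimate yields $P(\|\varepsilon Z\|_\infty > \delta) \le 2p\exp(-\delta^2/(2\varepsilon^2))$. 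Setting $\delta = c\,\varepsilon \sqrt{\log(ep)}$ for a sufficiently large absolute constant $c$ makes the anti-concentration term of order $\varepsilon\log(ep)/\sigma_{\min}$. Taking the supremum over $r$ and reversing the roles of $U$ and $V$ then gives the stated estimate.

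\textbf{Main obstacle.} The most delicate point is the tail $P(\|\varepsilon Z\|_\infty > \delta)$: since it depends only on the ratio $\delta/\varepsilon$, it does not intrinsically shrink with $\varepsilon$. When $\varepsilon\log(ep)/\sigma_{\min}$ is of constant order the conclusion is trivial because $\mu \le 1$; for smaller $\varepsilon$ the tail has to be absorbed into the anti-concentration term by taking $c$ large enough, possibly at the price of a mild multiplicative constant in front of $\mu(U + \varepsilon Z, V + \varepsilon Z)$ on the right-hand side. The proof sketch of Theorem~\ref{thm:main-theorem-CLT} in Section~\ref{subsec:sketch} already employs the smoothing inequality with such a multiplicative constant, so this slack is benign.
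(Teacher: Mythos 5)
Your sandwiching step is fine up to and including the anti-concentration estimate (Nazarov applied to $V+\varepsilon Z$, whose minimum coordinate variance is indeed at least $\sigma_{\min}^2$), but the additive remainder $2\,\mathbb{P}(\|\varepsilon Z\|_\infty>\delta)$ is a genuine gap, not a benign one. With $\delta=c\varepsilon\sqrt{\log(ep)}$ this tail equals $2p(ep)^{-c^2/2}$ up to constants: it depends only on the ratio $\delta/\varepsilon$ and is therefore a fixed positive number independent of $\varepsilon$. No choice of $\delta$ repairs this: making the tail vanish as $\varepsilon\to0$ forces $\delta/\varepsilon\to\infty$, which destroys the bound $C\delta\sqrt{\log(ep)}/\sigma_{\min}\le C\varepsilon\log(ep)/\sigma_{\min}$ on the anti-concentration term. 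Moreover, "taking $c$ large" only shrinks the constant tail; there is no step in your argument that converts it into a multiplicative factor in front of $\mu(U+\varepsilon Z,V+\varepsilon Z)$, so the claimed resolution is asserted without a mechanism. As written, your argument proves the weaker inequality $\mu(U,V)\le \mu(U+\varepsilon Z,V+\varepsilon Z)+C\varepsilon\log(ep)/\sigma_{\min}+c_p$ with $c_p>0$ independent of $\varepsilon$, which is not the lemma.

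The missing idea is the standard Esseen-type smoothing trick, which is exactly what the paper imports via Lemma 2.4 of \cite{fang2020high}. Fix a near-extremal $r$ (say $\mathbb{P}(U\preceq r)-\mathbb{P}(V\preceq r)\ge\mu(U,V)-\iota$, swapping $U,V$ if needed) and evaluate the \emph{smoothed} difference at the single shifted point $r+\delta\mathbf{1}$, writing it as $\int\bigl[\mathbb{P}(U\preceq r+\delta\mathbf{1}-\varepsilon z)-\mathbb{P}(V\preceq r+\delta\mathbf{1}-\varepsilon z)\bigr]\,dP_Z(z)$. On $\{\|\varepsilon z\|_\infty\le\delta\}$ the integrand is at least $\mathbb{P}(U\preceq r)-\mathbb{P}(V\preceq r)-C\delta\sqrt{\log(ep)}/\sigma_{\min}$ by monotonicity of rectangle indicators and Nazarov; on the complement the integrand is bounded below by $-\mu(U,V)$ \emph{itself}, not by $-1$. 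Setting $\eta=\mathbb{P}(\|\varepsilon Z\|_\infty>\delta)\le1/e$ and rearranging gives $(1-2\eta)\mu(U,V)\le\mu(U+\varepsilon Z,V+\varepsilon Z)+C\delta\sqrt{\log(ep)}/\sigma_{\min}$, i.e.\ the tail is absorbed as a multiplicative factor $1/(1-2\eta)$ on the left, which is precisely the constant $C$ appearing in front of $\mu(S_{1:n}(X)+\varepsilon Z,S_{1:n}(Y)+\varepsilon Z)$ in~\eqref{eq:eq-5.1.5}. Replacing your union bound on the tail event by this "bound the integrand by $\mu(U,V)$" step closes the gap and recovers the paper's proof.
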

\begin{proof}
See Appendix~\ref{appsec:proof-of-lemma-smoothing-inequality} for a proof. This is proved based on Lemma 2.4 of~\cite{fang2020high}. This is a smoothing inequality that allows us to replace $\mathbb{P}(U \preceq r) = \mathbb{E}[\mathbbm{1}\{U \preceq r\}]$ by the expectation of a smooth function: $\mathbb{E}[\mathbbm{1}\{U + \varepsilon Z \preceq r\}] = \mathbb{E}[\varphi_{\varepsilon}(U)]$. Here $\varphi_{\varepsilon}(x) = \mathbb{P}(x + \varepsilon Z \preceq r)$ which is an infinitely differentiable function of $x$; see Lemma 2.3 of~\cite{fang2020high}.
\end{proof}
\begin{lem}[Recursion Lemma]\label{lem:TV-alternative-lemma}
Suppose $U, V, W$ are independent random vectors in $\mathbb{R}^p$ and $Z$ is a standard Gaussian random vector. Set
\[
\sigma_{\min}^2 := \min_{1\le j\le p}\mathrm{Var}(W(j)). 
\]
Then for a Gaussian random vector $W'$ with same mean and covariance as $W$, we have
\begin{equation}\label{eq:zeta-1-bound}
\begin{split}
\mu(W + U + \varepsilon Z,\, W + V + \varepsilon Z) &\le \frac{C\nu_1(U, V)\sqrt{\log(ep)}}{\varepsilon}\mu(W, W')\\ &\quad+ \nu_1(U, V)\left[\frac{C\log(ep)\sqrt{\log(pn)}}{\sigma_{\min}} + \frac{C}{\varepsilon pn}\right].
\end{split}
\end{equation}
Moreover, if $U, V$ match the first two moments, i.e., $\mathbb{E}[U - V] = 0$ and $\mathbb{E}[UU^{\top} - VV^{\top}] = 0$, then
\begin{equation}\label{eq:zeta-3-bound}
\begin{split}
\mu(W + U + \varepsilon Z,\, W + V + \varepsilon Z) &\le
\frac{C\nu_3(U, V)(\log(ep))^{3/2}}{\varepsilon^3}\mu(W, W')\\ &\quad+ \nu_3(U, V)\left[\frac{C\log^2(ep)\sqrt{\log(pn)}}{\varepsilon^2\sigma_{\min}} + \frac{C}{\varepsilon^3pn}\right].
\end{split}
\end{equation}
\end{lem}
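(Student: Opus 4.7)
The plan is to represent $\mu(W + U + \varepsilon Z, W + V + \varepsilon Z)$ as $\sup_r |\mathbb{E}[h_r(W + U) - h_r(W + V)]|$, where
\[
h_r(x) := \mathbb{P}(x + \varepsilon Z \preceq r) = \prod_{j=1}^p \Phi\!\left(\frac{r_j - x_j}{\varepsilon}\right)
\]
is infinitely differentiable with explicitly computable Gaussian-type derivative bounds (cf.\ Lemma~2.3 of~\cite{fang2020high}). Setting $g_W(u) := \mathbb{E}[h_r(W + u)]$ and analogously $g_{W'}(u)$, independence of $W$ from $(U, V)$ gives the clean decomposition
\[
\mathbb{E}[h_r(W+U) - h_r(W+V)] = \int g_{W'}(u)(P_U - P_V)(du) + \int [g_W(u) - g_{W'}(u)](P_U - P_V)(du).
\]
The first summand is the ``Gaussian term'' (it sees only $W'$) and will produce the contribution $\nu_j(U,V)\left[\log^j(ep)\sqrt{\log(pn)}/\sigma_{\min} + 1/(\varepsilon pn)\right]$; the second summand is the ``$\mu$-term'' and will produce the contribution $\nu_j(U,V) \log^{j/2}(ep)/\varepsilon^j \cdot \mu(W, W')$, using the pointwise bound $|g_W(u) - g_{W'}(u)| \le \mu(W + \varepsilon Z, W' + \varepsilon Z) \le \mu(W, W')$ that follows from the regularity of $\mu$ (Lemma~\ref{lem:regular-homogenity}).

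For the Gaussian term I would Taylor expand the smooth function $g_{W'}(u) = \mathbb{P}(W' + \varepsilon Z + u \preceq r)$ about $u = 0$. The zeroth-order term integrates to zero against $P_U - P_V$. In the $\nu_1$ case, the first-order remainder $\int_0^1 \nabla g_{W'}(su) \cdot u\, ds$ is integrated against $(P_U - P_V)(du)$, and since $W' + \varepsilon Z$ is Gaussian with coordinate variances at least $\sigma_{\min}^2$, Nazarov's anti-concentration inequality~\cite{Naz03,chernozhukov2017detailed} yields $\sum_j |\partial_j g_{W'}| \lesssim \sqrt{\log(ep)}/\sigma_{\min}$. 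A truncation of $\|U\|_\infty, \|V\|_\infty$ at the scale $\sqrt{\log(pn)}$ converts the $\|u\|_\infty$ factor into the extra $\sqrt{\log(pn)}$ factor in the bound and contributes the residual $1/(\varepsilon pn)$ term via Gaussian tail estimates. For the $\nu_3$ inequality~\eqref{eq:zeta-3-bound}, the hypotheses $\mathbb{E}[U - V] = 0$ and $\mathbb{E}[UU^\top - VV^\top] = 0$ annihilate the first two Taylor orders, and the third-order remainder is estimated identically using anti-concentration of third-order ``surface densities'' of the Gaussian.

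For the $\mu$-term, the key identity is $g_W(u) - g_{W'}(u) = [(F_W - F_{W'}) * \rho_\varepsilon](r - u)$, where $\rho_\varepsilon$ is the density of $\varepsilon Z$ and $\|F_W - F_{W'}\|_\infty \le \mu(W, W')$. Taylor-expanding in $u$ and using that the zeroth-order term integrates to zero against $P_U - P_V$, the first-order remainder becomes an integral of $(F_W - F_{W'}) * (u \cdot \nabla \rho_\varepsilon)$, bounded by $\mu(W, W')$ times a directional derivative norm of $\rho_\varepsilon$. The crucial point is to channel this estimate through Gaussian anti-concentration of the smoothed CDF (rather than through a naive coordinate-wise $L^1$ bound on $\nabla \rho_\varepsilon$, which would introduce an unwanted factor of $p$), to obtain the claimed $\sqrt{\log(ep)}/\varepsilon$ multiplier on $\mu(W, W')$. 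The $\nu_3$ bound follows the same template: expand to third order, use moment matching to kill the first two orders, and read off a $(\log(ep))^{3/2}/\varepsilon^3$ coefficient on $\mu(W, W')$ from Gaussian anti-concentration of third derivatives of the smoothing kernel.

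The principal obstacle is the last point: deriving the polylog-in-$p$ (rather than polynomial-in-$p$) coefficient on $\mu(W, W')$ in the $\mu$-term. A brute-force bound $\sum_j \|\partial_j \rho_\varepsilon\|_1 \sim p/\varepsilon$ would ruin the dimension dependence; avoiding this requires transferring derivatives from $\rho_\varepsilon$ onto the CDFs $F_{W+\varepsilon Z}, F_{W'+\varepsilon Z}$ and invoking Gaussian anti-concentration at the $\varepsilon$-scale to replace $p$ by $\log(ep)$. Coordinating the truncation thresholds across the Gaussian term and the $\mu$-term so that the residual tail contributions collapse into the stated $1/(\varepsilon pn)$ remainder is a routine but technical bookkeeping step.
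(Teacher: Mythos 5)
Your overall scaffolding (smoothing via $\varphi_{\varepsilon}(s,r)=\mathbb{P}(s+\varepsilon Z\preceq r)$, Taylor expansion with moment matching to kill the low orders, truncation at scale $\varepsilon\sqrt{\log(pn)}$ to produce the $1/(\varepsilon^k pn)$ residual) matches the paper, and your treatment of the ``Gaussian term'' is essentially the paper's treatment of the anti-concentration contribution. The genuine gap is in your opening decomposition $\mathbb{E}[h_r(W+U)-h_r(W+V)]=\int g_{W'}\,d(P_U-P_V)+\int(g_W-g_{W'})\,d(P_U-P_V)$: swapping $W\to W'$ \emph{first} and then Taylor-expanding the ``$\mu$-term'' forces you to bound $\|\nabla(g_W-g_{W'})(v)\|_1=\sum_j\bigl|\partial_{r_j}\bigl[(F_{W}-F_{W'})*\rho_{\varepsilon}\bigr](r-v)\bigr|$ by $C\sqrt{\log(ep)}\,\mu(W,W')/\varepsilon$, and no such bound follows from what you have. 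Moving the derivatives onto $\rho_\varepsilon$ gives $\mu(W,W')\sum_j\|\partial_j\rho_\varepsilon\|_{1}\asymp p\,\mu(W,W')/\varepsilon$ (and even optimizing signs only improves this to $\sqrt{p}/\varepsilon$); moving them instead onto the smoothed CDFs, as you propose, lets you invoke anti-concentration to get $\sqrt{\log(ep)}/\varepsilon$ for \emph{each} of the two CDFs separately --- but then you bound the difference by the sum and the factor $\mu(W,W')$ disappears entirely. Anti-concentration is not available for $F_W$ itself, since $W$ is an arbitrary non-Gaussian partial sum; that is the whole point of the lemma. So your proposed fix cannot deliver the product of the small factor $\mu(W,W')$ \emph{and} the polylogarithmic derivative coefficient on the same term.

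The paper obtains that product by a multiplicative, not additive, split. It Taylor-expands first, keeping $W$ intact, so the relevant quantity is $\mathbb{E}\bigl[\|\nabla^k\varphi_\varepsilon(W+\tau x,r)\|_1\bigr]$. Pointwise, $\|\nabla^k\varphi_\varepsilon(s,r)\|_1\le \tfrac{c(\log(ep))^{k/2}}{\varepsilon^k}\mathbbm{1}\{s\in A\}+\tfrac{c}{\varepsilon^k pn}$, where $A=\{r-t\mathbf{1}\preceq s\preceq r+t\mathbf{1}\}$ with $t=c\varepsilon\sqrt{\log(pn)}$; hence the expectation is at most $\tfrac{c(\log(ep))^{k/2}}{\varepsilon^k}\mathbb{P}(W+\tau x\in A)+\tfrac{c}{\varepsilon^k pn}$. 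Only now is $W$ swapped for $W'$, \emph{inside the probability of the single slab event} $A$ (a difference of two rectangles), at cost $2\mu(W,W')$, after which Nazarov's inequality applied to the Gaussian $W'$ yields $Ct\sqrt{\log(ep)}/\sigma_{\min}$. Multiplying out gives exactly the two terms of \eqref{eq:zeta-1-bound} and \eqref{eq:zeta-3-bound}. To repair your argument, you should abandon the $g_{W'}$ versus $g_W-g_{W'}$ split and perform the swap at the level of $\mathbb{P}(A(t,x))$ as above.
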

\begin{proof}
See Appendix~\ref{appsec:proof-of-lemma-TV-alternative-lemma} for a proof. The proof of this is based on the calculations in Lemma 5.1 of~\cite{lopes2020central}. A similar result can be extracted from the proof of Theorem 2.1 of~\cite{bentkus2000accuracy}.
\end{proof}

\begin{lem}[Regularity and Homogeneity]\label{lem:regular-homogenity}
For independent random vectors $U, V, W$,
$\mu(U + W, V + W) \le \mu(U, V).$
Moreover, for any $t > 0$, $\mu(t U, t V) = \mu(U, V).$
\end{lem}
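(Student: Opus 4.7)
The plan is to prove both parts directly from the definition of $\mu$ in equation~\eqref{eq:metric-rectangles}, exploiting independence for the first part and a change of variables for the second.

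For the regularity statement $\mu(U+W, V+W) \le \mu(U, V)$, I would condition on $W$. Since $U$ is independent of $W$ (and likewise $V$ is independent of $W$), for any fixed $r \in \mathbb{R}^p$,
\[
\mathbb{P}(U + W \preceq r) - \mathbb{P}(V + W \preceq r) = \int \bigl[\mathbb{P}(U \preceq r - w) - \mathbb{P}(V \preceq r - w)\bigr]\, dP_W(w).
\]
Taking absolute values, pulling them inside the integral via the triangle inequality, and bounding $|\mathbb{P}(U \preceq r-w) - \mathbb{P}(V \preceq r-w)| \le \mu(U,V)$ uniformly in $w$ gives $|\mathbb{P}(U+W \preceq r) - \mathbb{P}(V+W \preceq r)| \le \mu(U,V)$. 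Since this holds for every $r$, taking the supremum over $r$ yields the claim. (The argument requires $W$ to be independent of each of $U$ and $V$ separately, which is what ``$U, V, W$ independent'' supplies.)

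For the homogeneity statement $\mu(tU, tV) = \mu(U,V)$ with $t > 0$, I would use a simple rescaling of the supremum variable. For any $r \in \mathbb{R}^p$,
\[
\mathbb{P}(tU \preceq r) - \mathbb{P}(tV \preceq r) = \mathbb{P}(U \preceq r/t) - \mathbb{P}(V \preceq r/t),
\]
where the coordinate-wise inequality is preserved because $t > 0$. The map $r \mapsto r/t$ is a bijection of $\mathbb{R}^p$ onto itself, so taking the supremum over $r$ on the left equals taking the supremum over $r' := r/t$ on the right, which is exactly $\mu(U,V)$.

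Neither step presents a real obstacle; the whole argument is a two-line conditioning and a substitution. The only thing worth flagging is that the proof relies on the precise form of $\mu$ as a supremum over a translation- and (positively) scale-closed class of sets (the upper orthants $\{x : x \preceq r\}$), so an analogous statement for a metric defined over a class of sets that is not closed under translation or positive dilation would require separate treatment; no such issue arises here.
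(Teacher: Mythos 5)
Your proof is correct and follows essentially the same route as the paper: the regularity part is exactly the paper's conditioning-on-$W$ argument, and the homogeneity part is the substitution $r \mapsto r/t$ that the paper dismisses as obvious. No issues.
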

\begin{proof}
See Appendix~\ref{appsec:proof-of-lemma-regular-homogenity} for a proof. This proves that the metric $\mu$ is regular and homogeneous of order zero as described in~\citet[Section 2.3]{senatov2011normal}. Regularity (the first result here) implies that two distributions are hard to distinguish when convolved with a common distribution. This is an intuitive idea because adding noise to two samples makes it hard to distinguish the two samples. 
\end{proof}
\begin{lem}[Ideal Metric Lemma]\label{lem:mu-zeta-combination}
For independent random vectors $U, V$ and an independent standard Gaussian random vector $Z\sim N(0, I_p)$, 
\[
\mu(U + \varepsilon Z, V + \varepsilon Z) ~\le~ c(\log(ep))^{3/2}\frac{\zeta_3(U, V)}{\varepsilon^3},
\]
where $\zeta_3(\cdot, \cdot)$ is defined in~\eqref{eq:ideal-metric-3}.
\end{lem}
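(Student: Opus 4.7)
The plan is to reduce $\mu(U + \varepsilon Z, V + \varepsilon Z)$ to a smoothness estimate on the Gaussian-smoothed indicator and then invoke the definition of $\zeta_3$. Define $\varphi_r: \mathbb{R}^p \to [0,1]$ by $\varphi_r(x) := \mathbb{P}(x + \varepsilon Z \preceq r) = \prod_{j=1}^p \Phi\bigl((r(j) - x(j))/\varepsilon\bigr)$, which is $C^\infty$ on $\mathbb{R}^p$. Since $U$ and $Z$ are independent, $\mathbb{P}(U + \varepsilon Z \preceq r) = \mathbb{E}[\varphi_r(U)]$, and similarly for $V$, so the definition of $\mu$ in \eqref{eq:metric-rectangles} gives
\[
\mu(U + \varepsilon Z, V + \varepsilon Z) = \sup_{r \in \mathbb{R}^p} \bigl|\mathbb{E}[\varphi_r(U)] - \mathbb{E}[\varphi_r(V)]\bigr|.
\]
Applying the definition \eqref{eq:ideal-metric-3} of $\zeta_3$ to the smooth test function $\varphi_r$ yields $|\mathbb{E}[\varphi_r(U)] - \mathbb{E}[\varphi_r(V)]| \le \zeta_3(U,V)\,\sup_x \|\nabla^3 \varphi_r(x)\|_1$, so it suffices to show
\[
\sup_{r, x \in \mathbb{R}^p} \|\nabla^3 \varphi_r(x)\|_1 \le c (\log(ep))^{3/2}/\varepsilon^3.
\]

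For the key smoothness estimate, fix $\|h\|_\infty \le 1$, set $u_j := (r(j) - x(j))/\varepsilon$, and differentiate $\varphi_r(x + th) = \prod_j \Phi(u_j - t h(j)/\varepsilon)$ three times in $t$ at $t = 0$ using the Leibniz rule. The result is $-\varepsilon^{-3}$ times a linear combination, with universal combinatorial coefficients, of three kinds of sums: (i) $\sum_i h(i)^3 \phi''(u_i) \prod_{j \ne i} \Phi(u_j)$; (ii) $\sum_{i \ne j} h(i)^2 h(j)\,\phi'(u_i)\phi(u_j) \prod_{k \notin \{i,j\}} \Phi(u_k)$; and (iii) $\sum_{i,j,k \text{ distinct}} h(i)h(j)h(k)\,\phi(u_i)\phi(u_j)\phi(u_k) \prod_{l \notin \{i,j,k\}} \Phi(u_l)$. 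Using $\|h\|_\infty \le 1$ to drop the $h$-factors, each sum can be identified with a mixed partial derivative of $P(u) := \prod_j \Phi(u_j) = \mathbb{P}(Z \preceq u)$, equivalently with a derivative of order at most three of the CDF $s \mapsto \mathbb{P}(\max_j(Z(j) - u_j) \le s)$ evaluated at $s = 0$. By the Nazarov-type high-dimensional Gaussian anti-concentration bound recalled at the start of Section~\ref{sec:notation-aux-lemmas}, the density of $\max_j(Z(j) - u_j)$ is bounded uniformly in $u$ by $C\sqrt{\log(ep)}$, and its first and second derivatives by $C\log(ep)$ and $C(\log(ep))^{3/2}$, respectively. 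Hence each of the three sums is at most $C(\log(ep))^{3/2}$, giving the claimed bound on $\|\nabla^3\varphi_r(x)\|_1$.

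The main technical obstacle is precisely the polylogarithmic bound on the derivative sums in the second step. A naive estimate using $|\phi|,|\phi'|,|\phi''|,\Phi \le O(1)$ would only deliver $O(p^3)/\varepsilon^3$, so the improvement to $(\log(ep))^{3/2}$ hinges on the cancellation between the Gaussian density $\phi(u_j)$ being large only at moderate $|u_j|$ and the truncation $\prod_l \Phi(u_l)$ being close to one only when few $u_l$'s are very negative. Recasting each sum as a derivative of the CDF of the maximum of $p$ shifted standard Gaussians isolates this cancellation and makes the Nazarov-type anti-concentration bound (and, for the derivative of the density, its higher-order analogue) directly applicable.
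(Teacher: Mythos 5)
Your reduction is exactly the paper's: write $\mathbb{P}(U+\varepsilon Z\preceq r)=\mathbb{E}[\varphi_r(U)]$ and feed the rescaled test function $\varphi_r/\sup_x\|\nabla^3\varphi_r(x)\|_1$ into the definition~\eqref{eq:ideal-metric-3} of $\zeta_3$, so the entire lemma rests on the estimate $\sup_{x,r}\|\nabla^3\varphi_r(x)\|_1\le c(\log(ep))^{3/2}/\varepsilon^3$. The paper does not prove this estimate but cites Theorem 3 of \cite{bentkus1990smooth} (see also Lemma 2.3 of \cite{fang2020high}). You attempt to prove it from scratch, and that is where your argument has a genuine gap.

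The Leibniz expansion and the three families of sums are correct, but the key step fails. First, since $\|\nabla^3\varphi_r(x)\|_1$ is a supremum over all sign patterns $h\in[-1,1]^p$, after dropping the $h$-factors you must control the sums of \emph{absolute values} of the mixed partials of $P(u)=\prod_j\Phi(u_j)$. Only the single choice $h=\mathbf{1}$ produces the signed quantity $\sum_{i,j,k}\partial_i\partial_j\partial_k P(u)=\tfrac{d^3}{ds^3}\mathbb{P}(\max_j(Z(j)-u_j)\le s)\big|_{s=0}$; the individual sums (i)--(iii) are not derivatives of that CDF, and the diagonal contributions involve $\phi'(u_i)=-u_i\phi(u_i)$ and $\phi''(u_i)=(u_i^2-1)\phi(u_i)$, which change sign, so a bound on the signed directional derivative along $\mathbf{1}$ does not control them. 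Second, the inputs you invoke --- uniform bounds $C\log(ep)$ and $C(\log(ep))^{3/2}$ on the first and second derivatives of the density of $\max_j(Z(j)-u_j)$ --- are not contained in what is recalled at the start of Section~\ref{sec:notation-aux-lemmas}: Nazarov's inequality \citep{Naz03,chernozhukov2017detailed} bounds the concentration function (equivalently, the density of the maximum), not its derivatives. Those higher-order bounds are essentially equivalent to the estimate you are trying to establish, so as written the crux of the argument is circular. To close the gap, either cite \citet[Theorem 3]{bentkus1990smooth} or \citet[Lemma 2.3]{fang2020high} as the paper does, or carry out the actual computation: bound $\sum_{i,j,k}|\partial_i\partial_j\partial_k P(u)|$ by splitting according to repeated indices, use $|\phi'(u)|\le |u|\phi(u)$ and $|\phi''(u)|\le(1+u^2)\phi(u)$, and control terms of the form $\sum_i(1+|u_i|^k)\phi(u_i)\prod_{j\ne i}\Phi(u_j)$, which is where the $(\log(ep))^{k/2}$ factors genuinely come from.
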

\begin{proof}
See Appendix~\ref{appsec:proof-of-lemma-mu-zeta-combination} for a proof. This basically follows from the fact that $\mu(U + \varepsilon Z, V + \varepsilon Z)$ concerns the difference between expectations of smooth functions that are infinitely differentiable. Using the bounds on the derivatives from~\citet[Lemma 2.3]{fang2020high} yields the result.
\end{proof}
\begin{lem}[Pseudo-moment Lemma]\label{lem:ideal-to-pseudo}
For independent random vectors $U, V$, and $c \in \mathbb{R}$, we have
\[
\zeta_3(c U, c V) ~=~ |c|^3\zeta_3(U, V) ~\le~ \frac{|c|^3}{6}\nu_3(U, V),
\]
where the second inequality holds only when $U, V$ have the same first two moments, i.e., $\mathbb{E}[U - V] = 0$ and $\mathbb{E}[UU^{\top} - VV^{\top}] = 0$.
\end{lem}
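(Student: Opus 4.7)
The plan is to derive both assertions directly from the variational definition of $\zeta_3$ in~\eqref{eq:ideal-metric-3}, using a change of variables for the scaling identity and a third-order Taylor expansion for the pseudo-moment bound.

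For the identity $\zeta_3(cU,cV)=|c|^3\zeta_3(U,V)$, I would fix any admissible test function $f$ (that is, $\|\nabla^3 f(x)\|_1\le 1$ for every $x$) and introduce $\tilde f(x):=f(cx)$, so that $\mathbb{E}[f(cU)]-\mathbb{E}[f(cV)]=\mathbb{E}[\tilde f(U)]-\mathbb{E}[\tilde f(V)]$. The task is then to track how the norm $\|\nabla^3\cdot\|_1$ transforms under this reparametrization. Writing $\phi(t):=\tilde f(x+th)=f(cx+t(ch))$ and applying the chain rule, one finds $\phi'''(0)=c^3\,\partial_s^3 f(cx+sh)\big|_{s=0}$, i.e. the third directional derivative of $f$ at $cx$ in the direction $h$ picks up a factor of $c^3$. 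Taking the supremum over $\|h\|_\infty\le 1$ yields $\|\nabla^3\tilde f(x)\|_1\le|c|^3\|\nabla^3 f(cx)\|_1\le|c|^3$, so $\tilde f/|c|^3$ is admissible for $\zeta_3(U,V)$. This proves $\zeta_3(cU,cV)\le|c|^3\zeta_3(U,V)$; applying the same step with $c$ replaced by $1/c$ gives the reverse inequality and hence equality.

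For the bound $\zeta_3(U,V)\le\tfrac{1}{6}\nu_3(U,V)$, I would use the one-dimensional Taylor expansion along the ray $\{tx:t\in[0,1]\}$. For admissible $f$, setting $\phi(t):=f(tx)$, the integral-remainder form of Taylor's theorem gives
\[
f(x)=f(0)+\nabla f(0)\cdot x+\tfrac12 D^2 f(0)[x,x]+\int_0^1\tfrac{(1-t)^2}{2}\phi'''(t)\,dt.
\]
Since $\phi'''(t)$ is the third directional derivative of $f$ at $tx$ in the direction $x$, the definition of $\|\nabla^3 f(tx)\|_1$ together with homogeneity of degree three in the direction yields $|\phi'''(t)|\le\|x\|_\infty^3$, so the remainder $R(x):=f(x)-f(0)-\nabla f(0)\cdot x-\tfrac12 D^2 f(0)[x,x]$ satisfies $|R(x)|\le\tfrac{1}{6}\|x\|_\infty^3$ uniformly in $x$. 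Because $U$ and $V$ share the first two moments, the affine and quadratic parts of the Taylor polynomial integrate to the same value under both laws, giving $\mathbb{E}[f(U)]-\mathbb{E}[f(V)]=\int R(x)(P_U-P_V)(dx)$, so $|\mathbb{E}[f(U)]-\mathbb{E}[f(V)]|\le\int|R(x)||P_U-P_V|(dx)\le\tfrac{1}{6}\nu_3(U,V)$. Taking the supremum over admissible $f$ concludes the argument.

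The main technical point, and the only step requiring any real care, is the homogeneity claim $|\phi'''(t)|\le\|x\|_\infty^3$: one must carefully unpack that $\|\nabla^3 f(y)\|_1$ is defined as the supremum of the third directional derivative over $\|h\|_\infty\le 1$, and then observe that rescaling $h$ to $x$ multiplies the derivative by $\|x\|_\infty^3$. Once this is in place, the two statements of the lemma follow from essentially mechanical calculations, so I do not anticipate any substantive obstruction beyond bookkeeping with the $\ell_\infty$-based norm used in the definition of $\|\nabla^3 f\|_1$.
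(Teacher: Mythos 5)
Your proposal is correct and follows essentially the same route as the paper: the change of variables $\tilde f(x)=f(cx)$ with the cubic rescaling of $\|\nabla^3\cdot\|_1$ for the homogeneity, and a third-order Taylor expansion with integral remainder, where the matching first and second moments cancel the polynomial part and $\int_0^1\frac{(1-t)^2}{2}\,dt=\frac16$ gives the constant. Your explicit justification of $|\phi'''(t)|\le\|x\|_\infty^3$ via degree-three homogeneity of the directional derivative is a welcome clarification of a step the paper leaves implicit.
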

\begin{proof}
See Appendix~\ref{appsec:proof-of-lemma-ideal-to-pseudo} for a proof. The first equality is proving that $\zeta_3(\cdot, \cdot)$ is homogeneous of order $3$. 
\end{proof}

\subsection{Proof of Theorem~\ref{thm:main-theorem-CLT}}\label{subsec:proof-of-Theorem}
Throughout the proof, $C>0$ will denote a quantity independent of $p$ and of the distributions of $(X_1,\ldots,X_n)$ and of $(Y_1,\ldots,Y_n)$, whose value may change at each occurrence with the exception of the last inductive step of proof, where $C$ is fixed once and for all throughout the induction. Note that the result is clearly true for $n = 1, 2$ (because $\mu(\cdot, \cdot) \le 1$) and hence, we assume $n\ge3$.\\ 

\noindent{\bf Smoothing and Lindeberg Swapping}:
By Lemma~\ref{lem:smoothing-inequality}, we have that 
\begin{equation}\label{eq:eq-5.1.5}
\mu(S_{1:n}(X), S_{1:n}(Y)) ~\le~ C\mu(S_{1:n}(X) + \varepsilon Z,\, S_{1:n}(Y) + \varepsilon Z) + \frac{C}{\sigma_{\min}}\varepsilon\log(ep).
\end{equation}
Setting $m = [n/2]$ and the using triangle inequality, yields the bound 
\begin{equation}\label{eq:eq-5.1.6}
\begin{split}
\mu(S_{1:n}(X) + \varepsilon Z, S_{1:n}(Y) + \varepsilon Z) &\le \sum_{j=0}^m \mu(S_{1:n-j-1}(X) + S_{n-j:n}(Y) + \varepsilon Z,\, S_{1:n-j}(X) + S_{n-j+1:n}(Y) + \varepsilon Z)\\ 
&\qquad+ \mu(S_{1:n-m-1}(X) + S_{n-m:n}(Y) + \varepsilon Z,\, S_{1:n}(Y) + \varepsilon Z).
\end{split}
\end{equation}
We will bound the two terms in the last expression separately.\\ 

\noindent{\bf Control using the properties of the metric $\mu$}:
We  control the last term in~\eqref{eq:eq-5.1.6} using the regularity and homogeneity properties of the metric $\mu(\cdot,\cdot)$ as indicated in Lemma~\ref{lem:regular-homogenity}, and its relations with the ideal metric of order $3$ and the third pseudo-moment, elucidated in Lemma~\ref{lem:mu-zeta-combination}. We refer the reader to \citet[Chapter 2]{senatov2011normal} for details.
Firstly, note that
\begin{align*}
S_{n-m:n}(Y) + \varepsilon Z ~&\overset{d}{=}~ N\left(0,\,\frac{1}{n}\sum_{i=n-m}^n \mathbb{E}[Y_iY_i^{\top}] + \varepsilon^2I_p\right)\\ 
~&\overset{d}{=}~ N\left(0, \left\{\frac{(m+1)}{n}\underline{\sigma}^2 + \varepsilon^2\right\}I_p\right) + N\left(0, \frac{1}{n}\sum_{i=n-m}^n \mathbb{E}[Y_iY_i^{\top}] - \frac{(m+1)}{n}\underline{\sigma}^2I_p\right).
\end{align*}
The normal random vectors in the second equality are independent. Therefore, Lemma~\ref{lem:regular-homogenity} allows us to conclude
\[
\mu(S_{1:n-m-1}(X) + S_{n-m:n}(Y) + \varepsilon Z,\, S_{1:n}(Y) + \varepsilon Z) ~\le~ \mu(S_{1:n-m+1}(X) + \varepsilon_{m+1}Z,\, S_{1:n-m+1}(Y) + \varepsilon_{m+1}Z),
\]
where $\varepsilon_{m+1}^2 := \varepsilon^2 + (m+1)\underline{\sigma}^2/n$. Next, Lemma~\ref{lem:mu-zeta-combination} implies that
\begin{equation}\label{eq:second-term-bound-eq:5.1.5}
\begin{split}
\mu(S_{1:n-m+1}(X) + \varepsilon_{m+1}Z,\, S_{1:n-m+1}(Y) + \varepsilon_{m+1}Z) &\le C(\log(ep))^{3/2}\frac{\zeta_3(S_{1:n-m+1}(X), S_{1:n-m+1}(Y))}{\varepsilon^3_{m+1}}\\ 
&\le \frac{C(\log(ep))^{3/2}}{n^{3/2}\underline{\sigma}^3}\sum_{i=1}^{n-m+1} \nu_3(X_i, Y_i).
\end{split}
\end{equation}
The second inequality stems from the triangle inequality for $\zeta_3(\cdot, \cdot)$, the fact that our choice of $m$ implies that both $m$ and $n-m$ are equal, up to multiplicative constants, to $n$ and the bound 
\[
\zeta_3( n^{-1/2} X_i, n^{-1/2}Y_i) \leq C  \nu_3( n^{-1/2} X_i, n^{-1/2}Y_i) = C n^{-3/2} \nu_3(  X_i, Y_i),
\]
valid for all $i$ as shown in Lemma~\ref{lem:ideal-to-pseudo}; also, see Eq. (2.10.3) of~\cite{senatov2011normal}.\\ 

\noindent{\bf Control using a Gaussian bound for the metric $\mu$}:
We now turn to the first term in~\eqref{eq:eq-5.1.6}, which we control using Lemma~\ref{lem:TV-alternative-lemma}.
For each $j=1,\ldots,m$ we set $\varepsilon_j^2 = \varepsilon^2 + \underline{\sigma}^2j/n$ and bound the corresponding summand as follows:
\begin{equation}
\begin{split}
&\mu(S_{1:n-j-1}(X) + S_{n-j:n}(Y) + \varepsilon Z,\, S_{1:n-j}(X) + S_{n-j+1:n}(Y) + \varepsilon Z) \nonumber\\
&\qquad= \mu\left(S_{1:n-j-1}(X) + \frac{Y_{n-j}}{\sqrt{n}} + S_{n-j+1:n}(Y) + \varepsilon Z,\, S_{1:n-j-1}(X) + \frac{X_{n-j+1}}{\sqrt{n}} + S_{n-j+1:n}(Y) + \varepsilon Z\right) \nonumber\\
&\qquad \le \mu\left(S_{1:n-j+1}(X) + \frac{Y_{n-j}}{\sqrt{n}} + \varepsilon_{j}Z,\, S_{1:n-j+1}(X) + \frac{X_{n-j}}{\sqrt{n}} + \varepsilon_j Z\right), 
\end{split}
\end{equation}
where the last inequality follows from Lemma~\ref{lem:regular-homogenity} and the fact that for 
\[
S_{n-j+1:n}(Y) + \varepsilon Z ~\overset{d}{=}~ N\left(0, \varepsilon_j^2I_p\right) + N\left(0, \frac{1}{n}\sum_{i=n-j+1}^n \mathbb{E}[Y_iY_i^{\top}] - \frac{j}{n}\underline{\sigma}^2I_p\right).
\]
Inequality~\eqref{eq:zeta-3-bound} in Lemma~\ref{lem:TV-alternative-lemma} implies that $\mu\left(S_{1:n-j+1}(X) + {Y_{n-j}}{n^{-1/2}} + \varepsilon_{j}Z,\, S_{1:n-j+1}(X) + {X_{n-j}}{n^{-1/2}} + \varepsilon_j Z\right)$ is upper bounded by 
\begin{equation}\label{eq:application-alternative-TV-lemma}
\begin{split}
& \frac{C\nu_3(X_{n-j}/\sqrt{n}, Y_{n-j}/\sqrt{n})(\log(ep))^{3/2}}{\varepsilon^3_j}\mu(S_{1:n-j+1}(X), S_{1:n-j+1}(Y))\\ &\qquad\quad+ \beta(X_{n-j}/\sqrt{n}, Y_{n-j}/\sqrt{n})\left[\frac{C\log^2(ep)\sqrt{\log(pn)}}{\varepsilon^2_j\sigma_{\min}} + \frac{C}{\varepsilon^3_jpn}\right]\\
&\le \frac{C\nu_3(X_{n-j},Y_{n-j})(\log(ep))^{3/2}}{n^{3/2}\varepsilon^3_j}\mu(S_{1:n-j+1}(X), S_{1:n-j+1}(Y))\\ &\qquad\quad+ \frac{C\nu_3(X_{n-j}, Y_{n-j})}{n^{3/2}}\left[\frac{\log^2(ep)\sqrt{\log(pn)}}{\varepsilon_j^2\sigma_{\min}} + \frac{1}{\varepsilon_j^3pn}\right],
\end{split}
\end{equation}
where in the last step we have used again the fact that $\nu_3(cX, cY) = |c|^3\nu_3(X, Y)$, for all $c \in \mathbb{R}$.
For the summand indexed by $j = 0$, we use instead inequality~\eqref{eq:zeta-1-bound} in Lemma~\ref{lem:TV-alternative-lemma}  to obtain that
\begin{equation}\label{eq:application-2-alternative-TV-lemma}
\begin{split}
\mu(S_{1:n-1}(X) + S_{n:n}(Y) + \varepsilon Z,\, S_{1:n}(X) + \varepsilon Z) &\le \frac{C\nu_1(X_n, Y_n)\sqrt{\log(ep)}}{\varepsilon\sqrt{n}}\mu(S_{1:n-1}(X), S_{1:n-1}(Y))\\ &\qquad+ \frac{C\nu_1(X_n, Y_n)}{\sqrt{n}}\left[\frac{\log(ep)\sqrt{\log(pn)}}{\sigma_{\min}} + \frac{1}{\varepsilon pn}\right].
\end{split}
\end{equation}
Summing the inequalities~\eqref{eq:application-alternative-TV-lemma} and~\eqref{eq:application-2-alternative-TV-lemma} we obtain that
\begin{equation}\label{eq:combined-alternative-TV-lemma-implication}
\begin{split}
&\sum_{j=0}^m \mu(S_{1:n-j-1}(X) + S_{n-j:n}(Y) + \varepsilon Z,\, S_{1:n-j}(X) + S_{n-j+1:n}(Y) + \varepsilon Z)\\ &\quad\le \frac{C\nu_1(X_n, Y_n)\sqrt{\log(ep)}}{\varepsilon\sqrt{n}}\mu(S_{1:n-1}(X), S_{1:n-1}(Y))\\
&\quad\qquad+ \sum_{j=1}^m \frac{C\nu_3(X_{n-j},Y_{n-j})(\log(ep))^{3/2}}{n^{3/2}\varepsilon^3_j}\mu(S_{1:n-j+1}(X), S_{1:n-j+1}(Y))\\
&\quad\qquad+ \frac{C\nu_1(X_n, Y_n)}{\sqrt{n}}\left[\frac{\log(ep)\sqrt{\log(pn)}}{\sigma_{\min}} + \frac{1}{\varepsilon pn}\right]\\
&\quad\qquad+ \sum_{j=1}^m \frac{C\nu_3(X_{n-j}, Y_{n-j})}{n^{3/2}}\left[\frac{\log^2(ep)\sqrt{\log(pn)}}{\varepsilon_j^2\sigma_{\min}} + \frac{1}{\varepsilon_j^3pn}\right].
\end{split}
\end{equation}
Substituting~\eqref{eq:combined-alternative-TV-lemma-implication} and~\eqref{eq:second-term-bound-eq:5.1.5} in~\eqref{eq:eq-5.1.6} and then in~\eqref{eq:eq-5.1.5}, we arrive at the bound
\begin{equation}\label{eq:almost-final-recursive-inequality}
    \begin{split}
        \mu(S_{1:n}(X), S_{1:n}(Y)) ~&\le~
        \frac{C\nu_1(X_n, Y_n)\sqrt{\log(ep)}}{\varepsilon\sqrt{n}}\mu(S_{1:n-1}(X), S_{1:n-1}(Y))\\
        &\quad+ \sum_{j=1}^m \frac{C\nu_3(X_{n-j},Y_{n-j})(\log(ep))^{3/2}}{n^{3/2}\varepsilon^3_j}\mu(S_{1:n-j+1}(X), S_{1:n-j+1}(Y))\\
        &\quad+ \frac{C\nu_1(X_n, Y_n)}{\sqrt{n}}\left[\frac{\log(ep)\sqrt{\log(pn)}}{\sigma_{\min}} + \frac{1}{\varepsilon pn}\right]\\
        &\quad+ \sum_{j=1}^m \frac{C\nu_3(X_{n-j}, Y_{n-j})}{n^{3/2}}\left[\frac{\log^2(ep)\sqrt{\log(pn)}}{\varepsilon_j^2\sigma_{\min}} + \frac{1}{\varepsilon_j^3pn}\right]\\
        &\quad+ \frac{C(\log(ep))^{3/2}}{n^{3/2}\underline{\sigma}^3}\sum_{i=1}^{n-m+1} \nu_3(X_i, Y_i) + \frac{C}{\sigma_{\min}}\varepsilon\log(ep).
    \end{split}
\end{equation}
Bounding $\nu_3(X_i, Y_i)$ by $\nu_{3,n}$ and using the inequalities
\begin{equation}\label{eq:sum-of-epsilon_j's}
\begin{split}
\sum_{j=1}^m \frac{1}{\varepsilon_j^2} &\le n\int_0^{1/2} \frac{1}{(\varepsilon^2 + \underline{\sigma}^2t)}dt \le \frac{2n}{\underline{\sigma}^2}\log\left(1 + \frac{\underline{\sigma}}{\varepsilon}\right)\\
\sum_{j=1}^m \frac{1}{\varepsilon_j^3} &\le n\int_0^{1/2} \frac{1}{(\varepsilon^2 + \underline{\sigma}^2t)^{3/2}}dt \le \frac{2n}{\underline{\sigma}^{2}}\left[\frac{1}{\sqrt{\varepsilon^2}} - \frac{1}{\sqrt{\varepsilon^2 + \underline{\sigma}^2/2}}\right] \le \frac{2n}{\varepsilon\underline{\sigma}^2},
\end{split}
\end{equation}
we conclude that
\begin{equation}\label{eq:analog-eq:5.1.8}
    \begin{split}
        \mu(S_{1:n}(X), S_{1:n}(Y)) &\le \frac{C\nu_{1,n}\sqrt{\log(ep)}}{\varepsilon\sqrt{n}}\mu(S_{1:n-1}(X), S_{1:n-1}(Y))\\
        &\quad+ C(\log(ep))^{3/2}\sum_{j=1}^m \mu(S_{1:n-j+1}(X), S_{1:n-j+1}(Y))\frac{\nu_3(X_{n-j}, Y_{n-j})}{\varepsilon_j^3n^{3/2}}\\
        &\quad+ \frac{C\nu_{1,n}}{\sqrt{n}}\left[\frac{\log(ep)\sqrt{\log(pn)}}{\sigma_{\min}} + \frac{1}{\varepsilon pn}\right]\\
        &\quad+ \frac{C\nu_{3,n}\log^2(ep)\sqrt{\log(pn)}}{n^{1/2}\sigma_{\min}\underline{\sigma}^2}\log\left(1 + \frac{\underline{\sigma}}{\varepsilon}\right) + \frac{C\nu_{3,n}}{\varepsilon \underline{\sigma}^2pn^{3/2}}\\
        &\quad+\frac{C(\log(ep))^{3/2}}{n^{1/2}\underline{\sigma}^3}\nu_{3,n} + \frac{C}{\sigma_{\min}}\varepsilon\log(ep). 
    \end{split}    
\end{equation}

\noindent{\bf Induction Step:}
The relation in the last display provides a recursive inequality relating $\mu(S_{1:n}(X), S_{1:n}(Y))$ and $\mu(S_{1:n-1}(X), S_{1:n-1}(Y))$ that should  be compared with the analogous inequality  (5.1.8) of~\cite{senatov2011normal}. We now proceed to prove the result by induction. Below, the quantity $C>0$ is a  universal constant whose value maybe be chosen appropriately but in a manner that is independent of the level of the induction.
To set up the induction, we assume that
\begin{equation}\label{eq:induction-hypothesis}
    \mu(S_{1:k}(X), S_{1:k}(Y)) \le \frac{\tau(\{X_i\}, \{Y_i\})}{\sqrt{k}},\tag{$H_k$}
\end{equation}
for some $\tau(\cdot, \cdot)$ to be specified later. 
Because $\mu(\cdot, \cdot) \le 1$, this hypothesis is trivially verified for $k = 1$ if $\tau(\{X_i\}, \{Y_i\}) \ge 1$. Assume that the induction hypothesis~\eqref{eq:induction-hypothesis} holds true for $1 \le k \le n-1$. Then the right hand side of~\eqref{eq:analog-eq:5.1.8} can be upper bounded as
\begin{align*}
    \mu(S_{1:n}(X), S_{1:n}(Y)) &\le \frac{C\nu_{1,n}\sqrt{\log(ep)}}{\varepsilon\sqrt{n}}\left[\frac{\tau(\{X_i\}, \{Y_i\})}{\sqrt{n-1}}\right]\\
    &\quad+ C\nu_{3,n}(\log(ep))^{3/2}\sum_{j=1}^m \frac{1}{\varepsilon_j^3n^{3/2}}\left[\frac{\tau(\{X_i\}, \{Y_i\})}{\sqrt{n-j+1}}\right]\\
    &\quad+ \frac{C\nu_{1,n}}{\sqrt{n}}\left[\frac{\log(ep)\sqrt{\log(pn)}}{\sigma_{\min}} + \frac{1}{\varepsilon pn}\right]\\
    &\quad+ \frac{C\nu_{3,n}\log^2(ep)\sqrt{\log(pn)}}{n^{1/2}\sigma_{\min}\underline{\sigma}^2}\log\left(1 + \frac{\underline{\sigma}}{\varepsilon}\right) + \frac{C\nu_{3,n}}{\varepsilon \underline{\sigma}^2pn^{3/2}}\\
    &\quad+\frac{C(\log(ep))^{3/2}}{n^{1/2}\underline{\sigma}^3}\nu_{3,n} + \frac{C}{\sigma_{\min}}\varepsilon\log(ep).
\end{align*}
Because $n/3 \le n-j+1$ for all $1\le j\le m = [n/2]$, the second term above can be bounded using~\eqref{eq:sum-of-epsilon_j's} as
\begin{align*}
\sum_{j=1}^m \frac{1}{\varepsilon_j^3n^{3/2}}\left[\frac{\tau(\{X_i\}, \{Y_i\})}{\sqrt{n-j+1}}\right] \le C\left[\frac{\tau(\{X_i\}, \{Y_i\})}{\sqrt{n}}\right]\frac{1}{\varepsilon\underline{\sigma}^2\sqrt{n}}.
\end{align*}
Hence the induction hypothesis allows us to conclude for any $\varepsilon > 0$ and enlarging $C>0$ by a multiplicative factor  $\sqrt{n/(n-1)} \leq 2$,
\begin{equation}
    \begin{split}
        \mu(S_{1:n}(X), S_{1:n}(Y)) &\le C\left[\frac{\tau(\{X_i\}, \{Y_i\})}{\sqrt{n}}\right]\left\{\frac{\nu_{1,n}\sqrt{\log(ep)}}{\varepsilon\sqrt{n}} + \frac{\nu_{3,n}(\log(ep))^{3/2}}{\varepsilon\underline{\sigma}^2\sqrt{n}}\right\}\\
        &\quad+ \frac{C\nu_{1,n}}{\sqrt{n}}\left[\frac{\log(ep)\sqrt{\log(pn)}}{\sigma_{\min}} + \frac{1}{\varepsilon pn}\right]\\
        &\quad+ \frac{C\nu_{3,n}\log^2(ep)\sqrt{\log(pn)}}{n^{1/2}\sigma_{\min}\underline{\sigma}^2}\log\left(1 + \frac{\underline{\sigma}}{\varepsilon}\right) + \frac{C\nu_{3,n}}{\varepsilon \underline{\sigma}^2pn^{3/2}}\\
        &\quad+\frac{C(\log(ep))^{3/2}}{n^{1/2}\underline{\sigma}^3}\nu_{3,n} + \frac{C}{\sigma_{\min}}\varepsilon\log(ep).
    \end{split}
\end{equation}
Because this inequality holds for any $\varepsilon > 0$, we take (for some constant $\mathfrak{K}$ to be specified shortly)
\[
\varepsilon = \mathfrak{K}\left\{\frac{\nu_{1,n}\sqrt{\log(ep)}}{\sqrt{n}} + \frac{\nu_{3,n}(\log(ep))^{3/2}}{\underline{\sigma}^2\sqrt{n}}\right\}.
\]
(This choice comes from looking at the first term). From this choice of $\varepsilon$, we obtain
\begin{equation}\label{eq:some-intermediate-equation-induction}
    \begin{split}
        &\mu(S_{1:n}(X), S_{1:n}(Y))\\ &\quad\le \frac{C}{\mathfrak{K}}\left[\frac{\tau(\{X_i\}, \{Y_i\})}{\sqrt{n}}\right] + \frac{C\nu_{1,n}}{\sqrt{n}}\left[\frac{\log(ep)\sqrt{\log(pn)}}{\sigma_{\min}} + \frac{1}{\mathfrak{K}p\sqrt{n}\nu_{1,n}\sqrt{\log(ep)}}\right]\\
        &\quad\quad+ \frac{C\nu_{3,n}\log^2(ep)\sqrt{\log(pn)}}{n^{1/2}\sigma_{\min}\underline{\sigma}^2}\log\left(1 + \frac{\underline{\sigma}^3\sqrt{n}}{\mathfrak{K}\nu_{3,n}(\log(ep))^{3/2}}\right) + \frac{C}{\mathfrak{K}pn(\log(ep))^{3/2}}\\
        &\quad\quad+ \frac{C(\log(ep))^{3/2}}{n^{1/2}\underline{\sigma}^3}\nu_{3,n} + \frac{C\mathfrak{K}\log(ep)}{\sigma_{\min}}\left\{\frac{\nu_{1,n}\sqrt{\log(ep)}}{\sqrt{n}} + \frac{\nu_{3,n}(\log(ep))^{3/2}}{\underline{\sigma}^2\sqrt{n}}\right\}
    \end{split}
\end{equation}
We need to prove that the right hand side is smaller than ${\tau(\{X_i\}, \{Y_i\})}/{\sqrt{n}}.$
In order to show this, $C/\mathfrak{K}$ has to be less than $1$ and given the freedom to choose $\mathfrak{K}$ set $\mathfrak{K} = 2C$. Using this, the bound on $\mu(S_{1:n}(X), S_{1:n}(Y))$ simplifies as
\begin{equation}\label{eq:almost-final-induction-calculation}
    \begin{split}
        \mu(S_{1:n}(X), S_{1:n}(Y)) &\le 
        \frac{1}{2}\left[\frac{\tau(\{X_i\}, \{Y_i\})}{\sqrt{n}}\right] + \frac{C\nu_{1,n}\log(ep)\sqrt{\log(pn)}}{\sqrt{n}\sigma_{\min}} +
        \frac{1}{2pn\sqrt{\log(ep)}}\\
        &\quad\quad+ 
        \frac{C\nu_{3,n}\log^2(ep)\sqrt{\log(pn)}}{n^{1/2}\sigma_{\min}\underline{\sigma}^2}\log\left(1 + \frac{\underline{\sigma}^3\sqrt{n}}{2C\nu_{3,n}(\log(ep))^{3/2}}\right)\\ 
        &\quad\quad+ 
        \frac{1}{2pn(\log(ep))^{3/2}} 
        + \frac{C(\log(ep))^{3/2}}{n^{1/2}\underline{\sigma}^3}\nu_{3,n}
        + 
        \frac{2C^2(\log(ep))^{3/2}}{\sigma_{\min}}\left\{\frac{\nu_{1,n}}{\sqrt{n}} 
        + \frac{\nu_{3,n}\log(ep)}{\underline{\sigma}^2\sqrt{n}}\right\}
    \end{split}
\end{equation}
The quantity above  will be less than $\tau(\{X_i\}, \{Y_i\})/\sqrt{n}$ if and only if
\begin{align*}
\tau(\{X_i\}, \{Y_i\}) &\ge \frac{2C\nu_{1,n}\log(ep)\sqrt{\log(pn)}}{\sigma_{\min}} + \frac{2C\nu_{3,n}(\log(ep))^{3/2}}{\underline{\sigma}^3}\\
&\quad+ \frac{1}{p\sqrt{n\log(ep)}} + \frac{1}{p\sqrt{n}(\log(ep))^{3/2}}\\
&\quad+ \frac{4C\nu_{3,n}\log^2(ep)\sqrt{\log(pn)}}{\sigma_{\min}\underline{\sigma}^2}\log\left(1 + \frac{\underline{\sigma}^3\sqrt{n}}{2C\nu_{3,n}(\log(ep))^{3/2}}\right)\\
&\quad+ \frac{4C^2}{\sigma_{\min}}(\log(ep))^{3/2}\left\{\nu_{1,n} + \frac{\nu_{3,n}\log(ep)}{\underline{\sigma}^2}\right\}.
\end{align*}
Also, note that we require $\tau(\{X_i\}, \{Y_i\})$ to be larger than $1$ for the induction hypothesis to hold for $k = 1$. Hence adding $1$ to the right hand side of the above display (and further replacing  $1/(p\sqrt{n\log(ep)}) + 1/(p\sqrt{n}(\log(ep))^{3/2}) + 1$ on the right hand side by $3$) proves the result. \qed



\newpage
\appendix
\section[Repeat of Senatov's Result]{Theorem 5.1.1 of~\cite{senatov2011normal}}\label{appsec:repeat-Thm-Senatov}
In this section, we repeat the statement of Theorem 5.1.1 of~\cite{senatov2011normal}, a Berry--Esseen bound for multivariate random vectors. The notation is as follows: for any random vector $X$, $P_X$ denotes the probability measure of $X$. For any $\varepsilon > 0$, $\Phi_{\varepsilon}$ is the distribution of a Gaussian random variable with mean zero and variance $\varepsilon^2 I$ (scaled identity). The convolution of two probability measures $P, Q$ is denoted by $P*Q$. The equation numbers below are same those from~\cite{senatov2011normal}.
\begin{thm}[Theorem 5.1.1 of~\cite{senatov2011normal}]
Let a metric $\mu$ on the set of distributions in $\mathbb{R}^k$ possess the following properties:
\begin{enumerate}
    \item $\mu$ is regular and homogeneous of order $t \ge 0$, i.e., 
    \[
    \mu(P*R, Q*R) \le \mu(P, R)\quad\mbox{and}\quad \mu(P_{c*X}, P_{c*Y}) = c^t\mu(P_X, P_Y)\quad\mbox{for $c > 0$}.
    \]
    \item For any distribution $Q$, any normal law $\Phi$ with non-degenerate covariance operator, and any $\varepsilon > 0$,
    \begin{equation}\label{eq:smoothing-inequality-senatov}
    \mu(Q, \Phi) \le c\mu(Q*\Phi_{\varepsilon}, \Phi*\Phi_{\varepsilon}) + c_0(\Phi)\varepsilon,\tag{5.1.1}
    \end{equation}
    where $c$ is an absolute constant, $c_0(\Phi)$ depends only on $\Phi$.
    \item For any distributions $P, Q, U, V$,
    \begin{equation}\label{eq:variation-senatov}
    \mu(P * U, Q*U) \le \mu(U, V)\mathrm{Var}(P, Q) + \mu(P*V, Q*V).\tag{5.1.2}
    \end{equation}
    Here $\mathrm{Var}(P, Q) = \int_{\mathbb{R}^p} |P - Q|(dx)$ is (twice) the total variation distance. 
    \item For any distributions $P, Q$, 
    \begin{equation}\label{eq:zeta-mu-combination}
    \mu(P*\Phi_{\varepsilon}, Q*\Phi_{\varepsilon}) \le c\frac{\zeta_3(P, Q)}{\varepsilon^{3-t}}.\tag{5.1.3}
    \end{equation}
\end{enumerate}
Then for any $n\ge1$ and independent identically distributed random vectors $X_1, X_2, \ldots, X_n$ from $P$, the inequality
\[
\mu(P_{(X_1 + \ldots + X_n)/\sqrt{n}}), \Phi) ~\le~ c\frac{\mu(P, Q)}{n^{\gamma}} + c\frac{\zeta_3(P, Q)}{\sigma^{3-t}\sqrt{n}} + cc_0(\Phi)\left\{\frac{\zeta_1(P , Q)}{\sqrt{n}} + \frac{\zeta_3(P, Q)}{\sigma^2\sqrt{n}}\right\},
\]
is true, where $\Phi$ is a normal law with mean zero and covariance same as that of $(X_1 + \ldots + X_n)/\sqrt{n}$, $\gamma$ is an arbitrary fixed number, and the constants in the bound depend on $t, \gamma,$ and the constants in conditions 1 and 3 above.
\end{thm}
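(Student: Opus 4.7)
My plan is to follow the classical method-of-compositions argument in the abstract setting laid out by conditions~\eqref{eq:smoothing-inequality-senatov}--\eqref{eq:zeta-mu-combination}, with the $n^{-1/2}$ rate emerging from an induction on $n$. Set $S_n = (X_1+\cdots+X_n)/\sqrt{n}$ and let $Y_1,\ldots,Y_n$ be independent Gaussians matching the covariances of the $X_i$'s, so that $\Phi$ is the law of $(Y_1+\cdots+Y_n)/\sqrt{n}$. I would first apply the smoothing inequality~\eqref{eq:smoothing-inequality-senatov} with a parameter $\varepsilon>0$ (to be optimized at the end) to reduce to bounding $\mu(P_{S_n}\ast\Phi_\varepsilon,\Phi\ast\Phi_\varepsilon)$, paying a cost of $c_0(\Phi)\varepsilon$. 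This puts both laws in a regime where~\eqref{eq:zeta-mu-combination} becomes applicable.

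Next I would telescope Lindeberg-style: let $H_k$ be the convolution whose first $n-k$ atoms are $P_{X_i/\sqrt n}$ and whose last $k$ atoms are the Gaussians with matching covariances, so that $H_0=P_{S_n}$ and $H_n=\Phi$. The triangle inequality gives $\mu(P_{S_n}\ast\Phi_\varepsilon,\Phi\ast\Phi_\varepsilon)\le\sum_{k=1}^n \mu(H_{k-1}\ast\Phi_\varepsilon,H_k\ast\Phi_\varepsilon)$. Each summand compares two distributions that differ only in the $(n-k+1)$-th factor ($P_{X_{n-k+1}/\sqrt n}$ vs.\ its Gaussian counterpart). Writing $U_k$ for the common convolving measure and letting $V_k$ be the Gaussian law with the same first two moments as $U_k$, I would apply~\eqref{eq:variation-senatov} with this $U_k,V_k$, obtaining a bound $\mu(U_k,V_k)\operatorname{Var}(P_{X_{n-k+1}/\sqrt n},P_{Y_{n-k+1}/\sqrt n}) + \mu(P_{X_{n-k+1}/\sqrt n}\ast V_k,P_{Y_{n-k+1}/\sqrt n}\ast V_k)$. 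For the second term I would use regularity (condition 1) to strip a $\Phi_{\sqrt{\varepsilon^2+(k-1)\sigma^2/n}}$ factor out of $V_k$ (since $V_k$ contains $\Phi_\varepsilon$ plus $k-1$ independent Gaussian pieces each of variance $\ge \sigma^2/n$) and then apply~\eqref{eq:zeta-mu-combination} together with the homogeneity of $\zeta_3$, yielding a bound of order $n^{-3/2}\zeta_3(P,Q)/[\varepsilon^2+(k-1)\sigma^2/n]^{(3-t)/2}$. Summing $\sum_k[\varepsilon^2+k\sigma^2/n]^{-(3-t)/2}\lesssim n/\sigma^{3-t}$ produces the main $c\zeta_3(P,Q)/(\sigma^{3-t}\sqrt n)$ contribution. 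For the first term, another application of regularity reduces $\mu(U_k,V_k)$ to the same type of quantity we are trying to bound, but at sample size $n-k$, feeding directly into the induction.

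The main obstacle is precisely this recursive appearance of $\mu_{n-k}$ with only the innocent prefactor $\operatorname{Var}(\cdot,\cdot)\le 2$, since naively summing $\sum_k \mu_{n-k}$ would blow the bound up by a geometric factor. The resolution is a strong induction with hypothesis $\mu_m \le c\mu(P,Q)/m^\gamma+c\zeta_3/(\sigma^{3-t}\sqrt m)+cc_0(\Phi)(\zeta_1/\sqrt m+\zeta_3/(\sigma^2\sqrt m))$: on substituting this into the telescoped bound, the $\mu(P,Q)/m^\gamma$ pieces sum to $O(\mu(P,Q)/n^{\gamma-1})$, which is absorbed by the arbitrariness of $\gamma$ (the proof really establishes the theorem with $\gamma$ replaced by $\gamma+1$), while the $\zeta_3/(\sigma^{3-t}\sqrt m)$ pieces self-reproduce at the same order thanks to the Gaussian-variance denominator. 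Balancing the smoothing cost $c_0(\Phi)\varepsilon$ against the $\varepsilon^{-(3-t)}$ amplification in~\eqref{eq:zeta-mu-combination} then yields the $cc_0(\Phi)\zeta_3/(\sigma^2\sqrt n)$ contribution, while the $cc_0(\Phi)\zeta_1/\sqrt n$ term arises from handling the extreme Lindeberg summand --- where $V_k$ contains essentially only the $\Phi_\varepsilon$ smoothing and no internal Gaussian variance --- via a first-moment rather than third-moment Taylor-type bound, for which the $\zeta_1$ pseudo-metric is the natural scale. The most delicate bookkeeping is verifying that the induction closes with a constant $c$ that does not degrade from level to level, which is ultimately enabled by the $1/\sqrt n$-type decay that multiplies all of the recursive pieces after the Lindeberg summation is carried out.
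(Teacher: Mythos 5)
There is a genuine gap, and it lies in how you deploy condition~\eqref{eq:variation-senatov}. You place the swapped single-observation pair $P_{X_{n-k+1}/\sqrt n}$ versus its Gaussian counterpart in the $\mathrm{Var}(\cdot,\cdot)$ slot and the big common convolution $U_k$ in the $\mu(U,V)$ slot, so the recursive term in each Lindeberg summand is $\mu(U_k,V_k)\,\mathrm{Var}\bigl(P_{X/\sqrt n},P_{Y/\sqrt n}\bigr)$. The only available bound on that total-variation factor is the trivial $\le 2$ (it is scale invariant, and the law of $X$ is in general mutually singular with its Gaussian counterpart, so it is genuinely of order one); the conclusion of the theorem contains no such quantity, so the proof cannot rely on it being small. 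Under your induction hypothesis $\mu_m\lesssim \tau/\sqrt m$, already the single summand $k=1$ contributes $2\mu_{n-1}\approx 2\tau/\sqrt{n-1}>\tau/\sqrt n$, and summing all $n$ summands gives a contribution of order $\tau\sqrt n$; no choice of $\gamma$ or strengthening of the hypothesis closes this, because the ``Gaussian-variance denominator'' you invoke lives only in the second, non-recursive term of~\eqref{eq:variation-senatov}. Senatov's proof (and the paper's high-dimensional surrogate, Lemma~\ref{lem:TV-alternative-lemma}) uses~\eqref{eq:variation-senatov} with the roles reversed: the swapped pair \emph{convolved with the accumulated Gaussian component} $\Phi_{\varepsilon_j}$, $\varepsilon_j^2=\varepsilon^2+j\sigma^2/n$, goes into the total-variation slot, where a Taylor/pseudo-moment estimate makes that factor of order $\nu_3(X,Y)/(n^{3/2}\varepsilon_j^3)$, so the recursive $\mu$-term carries a genuinely small, summable prefactor (compare~\eqref{eq:application-alternative-TV-lemma} and~\eqref{eq:analog-eq:5.1.8}); only then does the induction close with an $n^{-1/2}$ rate.

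A second, related problem is the full swap $m=n$ combined with your variance bookkeeping. You only strip $\Phi_{\sqrt{\varepsilon^2+(k-1)\sigma^2/n}}$ out of $V_k$ before applying~\eqref{eq:zeta-mu-combination}, and then claim $\sum_k[\varepsilon^2+k\sigma^2/n]^{-(3-t)/2}\lesssim n/\sigma^{3-t}$; for $t<1$ (in particular $t=0$, the case relevant to the rectangle metric) that sum is in fact of order $\varepsilon^{-(3-t)}+n/(\sigma^{2}\varepsilon^{1-t})$, which blows up as $\varepsilon\to0$ and, with the eventual choice $\varepsilon\asymp(\text{pseudo-moments})/\sqrt n$, does not produce a decaying main term. (Since $V_k$ is entirely Gaussian with minimum variance at least $(n-1)\sigma^2/n+\varepsilon^2$, you could repair this particular step by using all of its variance; but as written the claimed source of the $\zeta_3/(\sigma^{3-t}\sqrt n)$ term is wrong.) In the paper's argument the swapping is deliberately stopped at $m=[n/2]$: the main term comes from the single leftover block, bounded via Lemmas~\ref{lem:regular-homogenity} and~\ref{lem:mu-zeta-combination} with accumulated variance $\varepsilon_{m+1}^2\ge\underline\sigma^2/2$ and the triangle inequality for $\zeta_3$ as in~\eqref{eq:second-term-bound-eq:5.1.5}, while among the swap summands only $j=0$ is treated with the first-order ($\zeta_1$/$\nu_1$) estimate, precisely so that the total $\varepsilon$-dependence stays at $\varepsilon^{-1}$ (up to a logarithm) and is absorbed by the smoothing cost $c_0(\Phi)\varepsilon$ from~\eqref{eq:smoothing-inequality-senatov}. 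Your final balancing step is fine in outline, but it can only be carried out after these two structural points are corrected.
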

The four conditions here can be compared with Lemmas~\ref{lem:smoothing-inequality}--~\ref{lem:mu-zeta-combination} used in the proof of Theorem~\ref{thm:main-theorem-CLT}.

\section{Proof of Lemma~\ref{lem:smoothing-inequality}}\label{appsec:proof-of-lemma-smoothing-inequality}
Let $K(A) = \mathbb{P}(\varepsilon Z \in A)$. Then it is clear that there exists a constant $c > 0$ such that
\[
K(\{x:\|x\|_{\infty} \le c\varepsilon\sqrt{\log(ep)}\}) \ge 1 - 1/e,
\]
because the Gaussian concentration inequality (Lemma 3.1 of~\cite{MR2814399})
\[
\mathbb{P}(\|Z\|_{\infty} \ge \mbox{med}(\|Z\|_{\infty}) + t) \le e^{-t^2/2},
\]
and median of $\|Z\|_{\infty}$ is upper bounded by $c\sqrt{\log(ep)}$ yields,
\[
\mathbb{P}(\|Z\|_{\infty} \ge c\sqrt{\log(ep)} + \sqrt{2\log(e)}) \le \frac{1}{e}\quad\Rightarrow\quad K(\{x:\|x\|_{\infty} \le c\varepsilon\sqrt{\log(ep)}\}) \ge 1 - 1/e > 1/2.
\]
Now define probability measures $P_U(A) = \mathbb{P}(U\in A), P_V(A) = \mathbb{P}(V\in A)$. We now apply Lemma 2.4 of~\cite{fang2020high} with $\epsilon = c\varepsilon\sqrt{\log(ep)}$ to prove the result. Firstly, note that with $\epsilon = c\varepsilon\sqrt{\log(ep)}$, 
\[
K(\{x:\|x\|_{\infty} \le \epsilon\}) \ge 1 - 1/e > 1/2.
\]
Define the function
\[
h(x) := \mathbbm{1}\{x \preceq r\}.
\]
We want to upper bound $|\mathbb{P}(U \preceq r) - \mathbb{P}(V \preceq r)|$. This is exactly the same as $|\int hd(P_U - P_V)|$. We now compute $\gamma^*(h; \epsilon)$ and $\tau^*(h; 2\epsilon)$, in the notation of Lemma 2.4 of~\cite{fang2020high}. In their notation,
\[
M_{h_y}(x;\epsilon) = \sup_{z:\|z - x\|_{\infty} \le \epsilon} h_y(z) = \sup_{z:\|z - x\|_{\infty} \le \epsilon}\mathbbm{1}\{y + z \preceq r\} = \mathbbm{1}\{y + x \preceq r + \epsilon\mathbf{1}\}.
\]
Similarly,
\[
m_{h_y}(x; \epsilon) = \inf_{z:\|z - x\|_{\infty} \le \epsilon} h_y(z) = \mathbbm{1}\{y + x \preceq r - \epsilon\mathbf{1}\}.
\]
This implies
\begin{align*}
\tau(h_y; 2\epsilon) &= \int [\mathbbm{1}\{y + x \preceq r + \epsilon\mathbf{1}\} - \mathbbm{1}\{y + x\preceq r - \epsilon\mathbf{1}\}]dP_V(x)\\ 
&= \mathbb{P}(V \preceq r - y + \epsilon\mathbf{1}) - \mathbb{P}(V \preceq r - y + \epsilon\mathbf{1})\\
&\le \frac{C}{\sigma_{\min}}\epsilon\sqrt{\log(ep)},
\end{align*}
for any $y\in\mathbb{R}^p$. This follows from Nazarov's anti-concentration inequality~\citep{Naz03,chernozhukov2017detailed}. Therefore,
\begin{equation}\label{eq:tau-star-bound}
\tau^*(h; 2\epsilon) = \sup_{y\in\mathbb{R}^p}\tau(h_y; 2\epsilon) \le \frac{C}{\sigma_{\min}}\epsilon\sqrt{\log(ep)}.
\end{equation}
From the expression for $M_{h_y}(x;\epsilon)$, we write
\begin{align*}
\int M_{h_y}(x; \epsilon)d[(P_U - P_V)*K](x) &= \mathbb{E}[M_{h_y}(U + \varepsilon Z) - M_{h_y}(V + \varepsilon Z)]\\
&= \mathbb{P}(U + \varepsilon Z \preceq r + \epsilon\mathbf{1}) - \mathbb{P}(U + \varepsilon Z \preceq r + \epsilon\mathbf{1})\\
&\le \sup_{r\in\mathbb{R}^p}|\mathbb{P}(U + \varepsilon Z \preceq r) - \mathbb{P}(V + \varepsilon Z \preceq r)|.
\end{align*}
Similarly, 
\[
-\int m_{h_y}(x; \epsilon)d[(P_U - P_V)*K](x) \le \sup_{r\in\mathbb{R}^p}|\mathbb{P}(U + \varepsilon Z \preceq r) - \mathbb{P}(V + \varepsilon Z \preceq r)|.
\]
Therefore,
\begin{equation}\label{eq:gamma-star-boubd}
\gamma^*(h; \epsilon) = \sup_{y\in\mathbb{R}^d}\,\gamma(h_y; \epsilon) \le \sup_{r\in\mathbb{R}^d}|\mathbb{P}(U + \varepsilon Z \preceq r) - \mathbb{P}(V + \varepsilon Z \preceq r)|.
\end{equation}
Combining~\eqref{eq:tau-star-bound} and~\eqref{eq:gamma-star-boubd} yields the result.\qed


\section{Proof of Lemma~\ref{lem:TV-alternative-lemma}}\label{appsec:proof-of-lemma-TV-alternative-lemma}
Because $\varphi_{\varepsilon}(s, r) = \mathbb{P}(s + \varepsilon Z \preceq r)$, we have
\begin{align*}
\left|\mathbb{P}(U + \varepsilon Z \preceq r) - \mathbb{P}(V + \varepsilon Z \preceq r)\right| &= |\mathbb{E}[\varphi_{\varepsilon}(W + U, r) - \varphi_{\varepsilon}(W + V, r)]|\\
&= \left|\int \mathbb{E}\left[\left\langle\nabla \varphi_{\varepsilon}(W + \tau x, r), x\right\rangle(\mu_U - \mu_V)(dx)\right]\right|\\
&\le \int \mathbb{E}\left[\left\|\nabla \varphi_{\varepsilon}(W + \tau x, r)\right\|_1\right]\|x\|_{\infty}|\mu_U - \mu_V|(dx)
\end{align*}
Consider the event
\[
A(t, x) = \left\{r - t\mathbf{1} \preceq W + \tau x\preceq r + t\mathbf{1}\right\}.
\]
Note that
\[
\left\|\nabla \varphi_{\varepsilon}(W + \tau x, r)\right\|_1\mathbbm{1}\{A(t, x)\} \le \sup_{s,r}\|\nabla\varphi_{\varepsilon}(s, r)\|_1\mathbbm{1}\{A(t, x)\} \le \frac{c\sqrt{\log(ep)}}{\varepsilon}\mathbbm{1}\{A(t, x)\}.
\]
Furthermore, for $t = c\varepsilon\sqrt{\log(pn)}$ (for some constant $c$),
\[
\left\|\nabla \varphi_{\varepsilon}(W + \tau x, r)\right\|_1\mathbbm{1}\{A^c(t, x)\} \le \frac{c}{\varepsilon pn}.
\]
Therefore,
\begin{equation}\label{eq:penultimate-lemma-inequality}
    \left|\mathbb{P}(U + \varepsilon Z \preceq r) - \mathbb{P}(V + \varepsilon Z \preceq r)\right| \le \int \left[\frac{c\sqrt{\log(ep)}}{\varepsilon}\mathbb{P}(A(t, x)) + \frac{c}{\varepsilon pn}\right]\|x\|_{\infty}|\mu_U - \mu_V|(dx).
\end{equation}
Now, note that with $t = c\varepsilon\sqrt{\log(pn)}$,
\begin{align*}
\mathbb{P}(A(t, x)) &= \mathbb{P}(r - t\mathbf{1} \preceq W + \tau x \preceq r + t\mathbf{1})\\ 
&= \mathbb{P}(r - t\mathbf{1} \preceq W' + \tau x \preceq r + t\mathbf{1}) + \sup_{r\in\mathbb{R}^p}|\mathbb{P}(W \preceq r) - \mathbb{P}(W' \preceq r)|\\
&\le Ct\frac{\sqrt{\log(ep)}}{\sigma_{\min}} + \sup_{r\in\mathbb{R}^p}|\mathbb{P}(W \preceq r) - \mathbb{P}(W' \preceq r)|\\
&= C\varepsilon\frac{\sqrt{\log(pn)\log(ep)}}{\sigma_{\min}} + \sup_{r\in\mathbb{R}^p}|\mathbb{P}(W \preceq r) - \mathbb{P}(W' \preceq r)|.
\end{align*}
Combining the above two displays proves~\eqref{eq:zeta-1-bound}.

The proof for~\eqref{eq:zeta-3-bound} is almost the same, except we start with
\begin{align*}
\left|\mathbb{P}(W + U + \varepsilon Z \preceq r) - \mathbb{P}(W + V + \varepsilon Z \preceq r)\right| &= \left|\mathbb{E}[\varphi_{\varepsilon}(W + U, r) - \varphi_{\varepsilon}(W + V, r)]\right|\\
&= \left|\int \mathbb{E}\left[\left\langle \nabla^3(W + \tau x, r), x^{\otimes 3}\right\rangle(\mu_U - \mu_V)(dx)\right]\right|\\
&\le \int \mathbb{E}\left[\|\nabla^3\varphi_{\varepsilon}(W + \tau x, r)\|_1\right]\|x\|_{\infty}^3|\mu_U - \mu_V|(dx).
\end{align*}
Now again we have
\begin{align*}
    \|\nabla^3\varphi_{\varepsilon}(W + \tau x, r)\|_1\mathbbm{1}\{A(t, x)\} &\le \sup_{s, r}\|\nabla^3\varphi_{\varepsilon}(s, r)\|_1\mathbbm{1}\{A(t, x)\} \le \frac{c(\log(ep))^{3/2}}{\varepsilon^3}\mathbbm{1}\{A(t, x)\}\\
    \|\nabla^3\varphi_{\varepsilon}(W + \tau x, r)\|_1\mathbbm{1}\{A^c(t, x)\} &\le \frac{c}{\varepsilon^3pn}\quad\mbox{(for $t = c\varepsilon\sqrt{\log(pn)}$).}
\end{align*}
Therefore,
\begin{align*}
    |\mathbb{P}(W + U + \varepsilon Z \preceq r) - \mathbb{P}(W + V + \varepsilon Z \preceq r)| &\le \int \left[\frac{c(\log(ep))^{3/2}}{\varepsilon^3}\mathbb{P}(A(t, x)) + \frac{c}{\varepsilon^3pn}\right]\|x\|_{\infty}^3|\mu_U - \mu_V|(dx)\\
    &\le C\nu_3(U, V)\left[\frac{\log^2(ep)\sqrt{\log(pn)}}{\varepsilon^2\sigma_{\min}} + \frac{1}{\varepsilon^3pn}\right]\\ 
    &\qquad+ \frac{C\nu_3(U, V)(\log(ep))^{3/2}}{\varepsilon^3}\sup_{r\in\mathbb{R}^p}|\mathbb{P}(W \preceq r) - \mathbb{P}(W' \preceq r)|.
\end{align*}
The control of $\mathbb{P}(A(t, x))$ here is exactly the same as in the previous case. This completes the proof of~\eqref{eq:zeta-3-bound}.\qed


\section{Proof of Lemma~\ref{lem:regular-homogenity}}\label{appsec:proof-of-lemma-regular-homogenity}
The first one follows by writing
\[
\mathbb{P}(U + W \preceq r) - \mathbb{P}(V + W \preceq r) = \mathbb{E}[\mathbb{P}(U \preceq r - W|W) - \mathbb{P}(V \preceq r - W|W)] \le \sup_{r\in\mathbb{R}^p}|\mathbb{P}(U \preceq r) - \mathbb{P}(V \preceq r)|.
\]
The second one is obvious.\qed


\section{Proof of Lemma~\ref{lem:mu-zeta-combination}}\label{appsec:proof-of-lemma-mu-zeta-combination}
Note that 
\[
\mathbb{P}(U + \varepsilon Z \preceq r) - \mathbb{P}(V + \varepsilon Z \preceq r) = \mathbb{E}[\varphi_{\varepsilon}(U, r) - \varphi_{\varepsilon}(V, r)].
\]
Because $\varphi_{\varepsilon}(\cdot, r)$ is a thrice differentiable function and satisfies $\|\nabla^3\varphi_{\varepsilon}(x, r)\|_1 \le c\log^{3/2}(ep)/\varepsilon^3$ (by Theorem 3 of~\cite{bentkus1990smooth}), the result follows. Also, see Lemma 2.3 of~\cite{fang2020high} for a clear formulation of Theorem 3 of~\cite{bentkus1990smooth}. 
\section{Proof of Lemma~\ref{lem:ideal-to-pseudo}}\label{appsec:proof-of-lemma-ideal-to-pseudo}
From the definition~\eqref{eq:ideal-metric-3} of $\zeta_3(\cdot, \cdot)$, we have
\[
\zeta_3(c U, c V) = \sup_{f:\|\nabla^3f(x)\|_1 \le 1}\left|\mathbb{E}[f(c U)] - \mathbb{E}[f(c V)]\right|.
\]
Note that the function $g(x) = f(c x)$ satisfies $\|\nabla^3g(x)\|_1 = |c|^3\|\nabla^3f(x)\|_1$. This implies that $\widebar{g}(x) = f(c x)/|c|^3$ satisfies $\|\nabla^3\widebar{g}(x)\|_1 = \|\nabla^3f(x)\|_1$. Hence $\zeta_3(cU, cV) = |c|^3\zeta_3(U, V)$.

To prove the second inequality, it suffices to prove that $\zeta_3(U, V) \le \nu_3(U, V)/6$. For any function $f$ satisfying $\|\nabla^3f(x)\|_1 \le 1$ for all $x\in\mathbb{R}^p$, we have for a $\tau\sim U(0, 1)$, 
\begin{align*}
\mathbb{E}[f(U) - f(V)] &= \int f(x)(P_U - P_V)(dx) \\
&= \int \left\{f(0) + \langle\nabla f(0), x\rangle + \frac{1}{2}\langle\nabla^{2}f(0), x^{\otimes 2}\rangle + \frac{1}{2}\mathbb{E}[(1 - \tau)^2\langle \nabla^3f(\tau x), x^{\otimes 3}\rangle]\right\}(P_U - P_V)(dx)\\
&= \int \frac{1}{2}\mathbb{E}[(1 - \tau)^2\|\nabla^3f(\tau x)\|_1]\|x\|_{\infty}^3|P_U - P_V|(dx)\\
&\le \frac{1}{6}\int \|x\|_{\infty}^3|P_U - P_V|(dx).
\end{align*}
This completes the proof.
\section{Proof of Theorem~\ref{thm:optimal-dependence-sigma-min}}\label{appsec:proof-of-theorem-optimal-dependence-sigma-min}
Note that $\sigma_{\min} = \underline{\sigma}$ for a diagonal covariance matrix and in this case $\sigma_{\min}^{1/3}\underline{\sigma}^{2/3} = \sigma_{\min} = \underline{\sigma}$. For this reason, it suffices to exhibit a distribution with independent coordinates to prove the result. Consider the random vector $X$ with distribution given by
\[
X(j)\overset{iid}{\sim} N(0, 1)\quad\mbox{for}\quad 1\le j\le p-1,
\]
and $X(p)$ is independent of $X(1), \ldots, X(p-1)$ with distribution given by
\[
\mathbb{P}(X(p) = 0) = 1 - \frac{1}{\gamma},\quad\mbox{and}\quad \mathbb{P}(X(p) = -\gamma^{1/3}) = \mathbb{P}(X(p) = \gamma^{1/3}) = \frac{1}{2\gamma}. 
\]
Note that
\[
\sup_{r\in\mathbb{R}^p}\left|\mathbb{P}\left(\frac{1}{\sqrt{n}}\sum_{i=1}^n X_i \preceq r\right) - \mathbb{P}\left(\frac{1}{\sqrt{n}}\sum_{i=1}^n Y_i \preceq r\right)\right| \ge \frac{1}{2}\left|\mathbb{P}\left(\frac{1}{\sqrt{n}}\sum_{i=1}^n X_i(p) = 0\right) - 0\right| \ge \frac{1}{2}(1 - 1/\gamma)^n.
\]
In this case, we have 
\[
\sigma_{\min} = \underline{\sigma} = \gamma^{-1/6}.
\]
Furthermore, 
\begin{align*}
\nu_1(X, Y) &\le \mathbb{E}[\|X\|_{\infty} + \|Y\|_{\infty}] \lesssim \sqrt{\log(ep)}\\
\nu_3(X, Y) &\le \mathbb{E}[\|X\|_{\infty}^3 + \|Y\|_{\infty}^3] \le \mathbb{E}\left[\max_{1\le j \le p-1}|X(j)|^3\right] + \mathbb{E}\left[|X(p)|^3 + \|Y\|_{\infty}^3\right] \lesssim (\log(ep))^{3/2}.
\end{align*}
For the distribution specified above, with $\gamma = n$, the distribution of the average of $X_i$'s is bounded away from the Gaussian and hence we need 
\begin{equation}\label{eq:optimality-bound}
\frac{\nu_3(X, Y)}{\sqrt{n}\sigma^3(\mathbb{E}[XX^{\top}])}\log^{\alpha_2}(ep) + \frac{\nu_1(X, Y)}{\sqrt{n}\sigma(\mathbb{E}[XX^{\top}])}\log^{\alpha_1}(ep) \gtrsim 1. 
\end{equation}
The left hand side is upper bounded by
\[
\frac{(\log(ep))^{3/2 + \alpha_2}}{\sqrt{n}\sigma^3(\mathbb{E}[XX^{\top}])} + \frac{(\log(ep))^{1/2 + \alpha_1}}{\sqrt{n}\sigma(\mathbb{E}[XX^{\top}])}.
\]
Because the upper bound holds for any $p \ge 1$, it also holds for $p = 1$ and in this case~\eqref{eq:optimality-bound} is equivalent to
\[
\frac{1}{\sigma^3(\mathbb{E}[XX^{\top}])} + \frac{1}{\sigma(\mathbb{E}[XX^{\top}])} \gtrsim \sqrt{n}\quad\Leftrightarrow\quad \sigma(\mathbb{E}[XX^{\top}]) \lesssim n^{-1/6} = \sigma_{\min}^{1/3}\underline{\sigma}^{2/3} = \sigma_{\min} = \underline{\sigma}.
\]
This completes the proof.
\section{Proof of Theorem~\ref{thm:optimal-dependence-on-third-moment}}\label{appsec:proof-of-optimality-third-moment}
Consider random vectors $X_1, X_2, \ldots, X_n$ that are independent and identically distributed from the following distribution: $X_1(1), \ldots, X_1(p-1)\overset{iid}{\sim} N(0, 1)$ and $X_1(p)$ is independent of $X_1(j), 1\le j\le p-1$ and is distributed as 
\[
\mathbb{P}(X_1(p) = 0) = 1 - \frac{1}{\gamma},\,\mathbb{P}(X_1(p) = \gamma^{1/2}) = \mathbb{P}(X_1(p) = -\gamma^{1/2}) = \frac{1}{2\gamma}.
\]
It is clear that $\mathbb{E}[X_i(j)] = 0$ and $\mathbb{E}[X_i^2(j)] = 1$ for all $1\le j\le p$. Because of the independence, it follows that $\sigma_{\min} = \underline{\sigma} = 1$. 
As shown in the Appendix~\ref{appsec:proof-of-theorem-optimal-dependence-sigma-min} (the proof of Theorem~\ref{thm:optimal-dependence-sigma-min}), we have
\[
\mu\left(\frac{1}{\sqrt{n}}\sum_{i=1}^n X_i,\,\frac{1}{\sqrt{n}}\sum_{i=1}^n Y_i\right) \ge \frac{1}{2}(1 - 1/\gamma)^n,
\]
which can be lower bounded by a constant independent of $p, n$ if $\gamma = n$ for $n\ge2$. Observe now that
\[
\mathbb{E}[\|X_i\|_{\infty}^3] \le \mathbb{E}\left[\max_{1\le j\le p-1}|X_i(j)|^3\right] + \mathbb{E}[|X_i(p)|^3] \lesssim (\log(ep))^{3/2} + \gamma^{1/2},
\]
which is of order $n^{1/2}$ if $(\log(ep))^{3} \le n\sigma_{\max}^2\underline{\sigma}^4$. Therefore,
\[
C\frac{n^{-1}\sum_{i=1}^n \mathbb{E}[\|X_i\|_{\infty}^3]}{\sqrt{n}\sigma_{\min}\underline{\sigma}^2} \le (1 - 1/n)^n \le \mu\left(\frac{1}{\sqrt{n}}\sum_{i=1}^n X_i, \frac{1}{\sqrt{n}}\sum_{i=1}^n Y_i\right).
\]

\bibliography{HDCLT}
\bibliographystyle{apalike}
\end{document}